\newtheorem{thm}{Theorem}[section]
\newtheorem{lem}[thm]{Lemma}
\newtheorem{cor}[thm]{Corollary}
\newenvironment {proof} {\noindent{\em Proof.}}{\hspace*{\fill}$\Box$\par\vspace{4mm}}
\title{  More on the $k$-color connection number of a graph\footnote{Supported by NSFC No.11371205 and 11531011.}}
\author{Hong Chang, Zhong Huang, Xueliang Li \\
{\small  Center for Combinatorics and LPMC}\\
{\small Nankai University, Tianjin 300071, P.R. China}\\
{\small Email: changh@mail.nankai.edu.cn, 2120150001@mail.nankai.edu.cn, lxl@nankai.edu.cn}\\
}
\date{}
\begin{document}
\maketitle
\begin{abstract}
An edge-colored graph $G$ is $k$-color connected if, between each pair of vertices, there exists a path using at least $k$ different colors. The $k$-color connection number of $G$, denoted by $cc_{k}(G)$, is the minimum number of colors needed to color the edges of $G$ so that $G$ is $k$-color connected.  First, we prove that let $H$ be a subdivision of a connected graph $G$, then $cc_{k}(H)\leq cc_{k}(G)$. Second, we give sufficient conditions to guarantee that $cc_{k}(G)=k$ in terms of minimum degree and the number of edges for 2-connected graphs. As a byproduct, we show that almost all graphs have the $k$-color connection number $k$. At last, we investigate the relationship between the $k$-color connection number and the rainbow connection number for a connected graph. In addition, we give exact values of $k$-color connection numbers for some graph classes: subdivisions of the wheel and the complete graph, and the generalised $\theta$-graph. \\[2mm]
\textbf{Keywords:} $k$-color connection number; edge subdivision; 2-connected; $k$-connected, rainbow connection number.\\
\textbf{AMS subject classification 2010:} 05C15, 05C40, 05C07.\\
\end{abstract}

\section{Introduction}

All graphs in this paper are undirected, finite and simple. We follow \cite{BM} for graph theoretical notation and terminology not described here. Let $G$ be a graph, we use $V, E, n, m$, and $\delta$ to denote its vertex set, edge set, number of vertices, number of edges, and minimum degree of $G$, respectively. Given two graphs $G$ and $H$, the union of $G$ and $H$, denoted by $G\cup H$, is the graph with vertex set $V(G)\cup V(H)$ and edge set $E(G)\cup E(H)$. The join of $G$ and $H$, denoted by $G+H$, is obtained from $G\cup H$ by joining each vertex of $G$ to each vertex of $H$.

Let $G$ be a nontrivial connected graph with an associated \emph{edge-coloring} $c : E(G)\rightarrow \{1, 2, \ldots, t\}$, $t \in \mathbb{N}$, where adjacent edges may have the same color. If adjacent edges of $G$ are assigned different colors by $c$, then $c$ is a \emph{proper (edge-)coloring}. For a graph $G$, the minimum number of colors needed in a proper coloring of $G$ is referred to as the \emph{edge-chromatic number} of $G$ and denoted by $\chi'(G)$. A path of an edge-colored graph $G$ is said to be a \emph{rainbow path} if no two edges on the path have the same color. The graph $G$ is called \emph {rainbow connected} if for every pair of distinct vertices of $G$ is connected by a rainbow path in $G$.  An edge-coloring of a connected graph is a \emph{rainbow connecting coloring} if it makes the graph rainbow connected. This concept of rainbow connection of graphs was introduced by Chartrand et al.~\cite{CJMZ} in 2008. The \emph{rainbow connection number} $rc(G)$ of a connected graph $G$ is the smallest number of colors that are needed in order to make $G$ rainbow connected. The readers who are interested in this topic can see \cite{LSS,LS} for a survey.

Inspired by rainbow coloring and proper coloring in graphs, Andrews et al.~\cite{ALLZ} and Borozan et al.~\cite{BFGMMMT} introduced the concept of proper-path coloring. Let $G$ be a nontrivial connected graph with an edge-coloring. A path in $G$ is called a \emph{proper path} if no two adjacent edges of the path receive the same color. An edge-coloring $c$ of a connected graph $G$ is a \emph{proper-path coloring} if every pair of distinct vertices of $G$ are connected by a proper path in $G$. And if $k$ colors are used, then $c$ is called a \emph{proper-path $k$-coloring}. An edge-colored graph $G$ is \emph{proper connected} if any two vertices of $G$ are connected by a proper path. The minimum number of colors that are needed in order to make $G$ proper connected is called \emph{proper connection number} of $G$, denoted by \emph{$pc(G)$}. Let $G$ be a nontrivial connected graph of order $n$ and size $m$, then we have that $1\leq pc(G) \leq \min\{\chi'(G), rc(G)\}\leq m$. Furthermore, $pc(G)=1$ if and only if $G=K_n$ and $pc(G)=m$ if and only if $G=K_{1,m}$ as a star of size $m$. For more details, we refer to \cite{GLQ,LLZ,LWY} and a dynamic survey \cite{LC}.

In 2011, Caro and Yuster \cite{CY} introduced a natural counterpart question of rainbow connection coloring, which is called the monochromatic connection coloring. An edge-coloring of a connected graph is called a monochromatic connection coloring if there is a monochromatic path joining any two vertices. Let $mc(G)$ denote the maximum number of colors used in monochromatic connection coloring of a graph $G$, which is called the monochromatic connection number of $G$. They presented some lower and upper bounds for $mc(G)$. Later, Cai et al. in \cite{CLW} obtained two kinds of Erd$\ddot{o}$s-Gallai-type results for $mc(G)$.

Notice that neither the rainbow connection coloring nor the proper-path coloring can force a certain number of colors to be used in the required colored path between each pair of vertices. On the other hand, the monochromatic connection coloring allow only one color to be assigned in the needed colored path between each pair of vertices. To address this gap, Coll et al. introduced the notion of the $k$-color connectivity of a graph in \cite{CLMMP}. An edge-colored graph $G$ is $k$-color connected if, between each pair of vertices, there is a path using at least $k$ different colors. And this coloring is called $k$-color connection coloring. The $k$-color connection number of $G$, is denoted by $cc_{k}(G)$, is the minimum number of colors needed to color the edges of $G$ so that $G$ is $k$-color connected. In \cite{CLMMP}, they classified the graphs that achieve all possible $k$-color connection numbers when $k$ is small. In addition, they detailed the $k$-color connection numbers for various graphs classes.

It is trivial that $cc_{1}(G)=1$ for any connected graph $G$, therefore, we assume that $k\geq2$ throughout this paper. In order to guarantee that $cc_{k}(G)$ exists, $G$ can contain no bridges, in other words, $G$ is supposed to be 2-edge connected. Otherwise, assume that $G$ has a bridge $uv$, then the endvertices $u$ and $v$ of the bridge are not connected by a path using more than one color. Note that for a connected graph $G$, the natural lower bound of $k$-color connection number is that $cc_{k}(G)\geq k$, moreover, the bound is tight in view of the wheel and the complete graph. But the general upper bound of this parameter is a problem. In \cite{CLMMP}, they conjectured that if $cc_{k}(G)$ exists, then $cc_{k}(G)\leq2k-1$.

Let us give an overview of the rest of this paper. In Section 2, we show that let $H$ be a subdivision of a connected graph $G$, then $cc_{k}(H)\leq cc_{k}(G)$. In Section 3, we provide sufficient conditions to guarantee that $cc_{k}(G)=k$ in terms of minimum degree and the number of edges for 2-connected graphs. As a byproduct, we prove that almost all graphs have the $k$-color connection number $k$. In section 4, it is showed that for two given positive integers $a$ and $b$ with $b\geq a$ and $k\leq a\leq 2k-2$, there exists a connected graph such that $cc_{k}(G)=a$ and $rc(G)=b$.

At the very beginning, we state some fundamental results on $k$-color connectivity, which are used in the sequel.

\begin{thm}\label{thm1.1}\cite{CLMMP}
If $cc_{k}(C_n)$ exists, then
\begin{equation*}
cc_{k}(C_n)=
\left\{
  \begin{array}{ll}
    DNE & \hbox{ if  $n\leq 2k-2$,}\\
    2k-1 & \hbox{ if  $n=2k-1$, and}\\
    k & \hbox{ if $n\geq2k$.} \\
    \end{array}
\right.
\end{equation*}
\end{thm}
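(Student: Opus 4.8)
My plan rests on two elementary facts that turn the problem into counting. A path exhibiting at least $k$ colors must have at least $k$ edges, and in $C_n$ the only two paths joining a given pair of vertices are the two arcs of the cycle that they cut off. Hence a coloring makes $C_n$ be $k$-color connected precisely when, for every pair $u,v$, at least one of the two arcs between them uses at least $k$ distinct colors; moreover, if those arcs have $a\le b$ edges then $b\ge\lceil n/2\rceil$, so the short arc can help only when $a\ge k$. I will label the vertices $v_0,\ldots,v_{n-1}$ cyclically and write $e_i=v_iv_{i+1}$.

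Given this, the case $n\le 2k-2$ is immediate: choosing $u,v$ at distance $\lfloor n/2\rfloor$ on the cycle makes both arcs between them have at most $\lceil n/2\rceil\le k-1$ edges, hence at most $k-1$ colors no matter how we color, so $cc_k(C_n)$ does not exist. For $n\ge 2k$ I would first settle $n=2k$ by coloring $e_0,e_1,\ldots,e_{2k-1}$ with $c(e_i)=(i\bmod k)+1$; since $k\mid 2k$ this is $k$-periodic around the whole cycle, so every $k$ consecutive edges realize all $k$ colors, and as the longer arc between any two vertices has at least $k$ edges it realizes all $k$ colors. Thus $cc_k(C_{2k})\le k$, which with the general bound $cc_k\ge k$ gives $cc_k(C_{2k})=k$. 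For $n>2k$, the cycle $C_n$ is a subdivision of the connected graph $C_{2k}$, so the subdivision inequality recorded in the introduction yields $cc_k(C_n)\le cc_k(C_{2k})=k$, and $cc_k(C_n)\ge k$ forces equality. (Alternatively one can run the same periodic coloring directly on $C_n$; when $k\nmid n$ the periodicity is broken at one place, and only pairs whose long arc straddles that place while the short arc has fewer than $k$ edges need separate checking --- there the long arc has at least $n-k+1$ edges, and a short bookkeeping of which colors its two wrapped pieces contribute finishes it.)

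The remaining case $n=2k-1$ carries the real content. The upper bound $cc_k(C_{2k-1})\le 2k-1$ comes from the rainbow coloring: all $2k-1$ edges get distinct colors, and the longer arc between any two vertices has $\lceil(2k-1)/2\rceil=k$ differently colored edges. For the lower bound, suppose $C_{2k-1}$ admits a $k$-color connecting coloring with at most $2k-2$ colors; recoloring repeated edges one at a time with brand-new colors cannot decrease the number of colors on any path and so preserves $k$-color connectivity, so I may assume exactly $2k-2$ colors are used. Then, with $2k-1$ edges, exactly one color --- say color $1$ --- occurs on two edges $e_a,e_b$ and every other color on a single edge. The edges $e_a,e_b$ split the cycle into arcs of $p\le q$ edges with $p+q=2k-3$, so $p\le k-2$; the minimal arc containing both $e_a$ and $e_b$ then has $p+2\le k$ edges and can be enlarged to an arc $A^{\ast}$ of exactly $k$ edges. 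Its complement $B^{\ast}$ has $k-1$ edges, none colored $1$ and all of distinct colors, so $B^{\ast}$ uses only $k-1$ colors; and $A^{\ast}$ has $k$ edges of which two carry color $1$, so $A^{\ast}$ also uses only $k-1$ colors. Thus the two endpoints of $A^{\ast}$ are joined by no $k$-colored path, a contradiction, and $cc_k(C_{2k-1})=2k-1$. The main obstacle is exactly this last step: one must exploit that at most $2k-2$ colors on $2k-1$ edges forces an almost-rainbow color distribution, and then locate the pair of vertices whose two arcs are simultaneously deficient in colors; the other parts are just the two-arc picture together with the length inequalities.
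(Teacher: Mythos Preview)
The paper does not prove this theorem; it is quoted from \cite{CLMMP} as a preliminary result, so there is no in-paper argument to compare against. Your proof is correct in all three cases. A couple of minor remarks: the ``subdivision inequality recorded in the introduction'' is actually Lemma~\ref{lem2.1}/Theorem~\ref{thm2.2} from Section~2, not the introduction, and while there is no circularity (that lemma's proof does not use Theorem~\ref{thm1.1}), you are invoking a later result to establish an earlier one; the direct periodic-coloring argument you sketch would make Theorem~\ref{thm1.1} stand on its own. The reduction in the $n=2k-1$ lower bound---pushing the color count up to exactly $2k-2$ so that a single repeated color is forced, then trapping both arcs of a well-chosen pair at $k-1$ colors---is clean and is really the heart of the matter.
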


Let $C_{n}^{ct}$ denote a cycle $C$ of length $n$ with the addition of a chord between two vertices at distance $t$ on $C$, the following result for general chorded cycles is presented in \cite{CLMMP}.

\begin{thm}\label{thm1.2}
For all $k\geq3$ and $t\leq k-1$, $cc_{k}(C_{2k-1}^{ct})=2k-t$.
\end{thm}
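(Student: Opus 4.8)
The plan is to realise $C_{2k-1}^{ct}$ as the generalised theta-graph $\Theta(1,t,2k-1-t)$. Write the cycle as $v_0v_1\cdots v_{2k-2}v_0$, let $e=v_0v_t$ be the chord, and let $A=v_0v_1\cdots v_t$ and $B=v_0v_{2k-2}v_{2k-3}\cdots v_{t+1}v_t$ be the two arcs of the cycle, of lengths $t$ and $2k-1-t$. Then $e$, $A$, $B$ are the three internally disjoint $v_0$--$v_t$ paths; since $t\le k-1$ we have $|A|=t\le k-1<k\le 2k-1-t=|B|$, and any path of the graph uses at most two of the three branches, so for every pair $\{x,y\}$ the $x$--$y$ paths are easy to list. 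The case $t=1$ is degenerate (it reduces to Theorem~\ref{thm1.1}), so I assume $t\ge 2$ below.

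\emph{Lower bound.} Let $c$ be a $k$-color connection coloring. For the hub pair $\{v_0,v_t\}$ the only paths are $e$, $A$, $B$, and the first two have fewer than $k$ edges, so $B$ must carry at least $k$ colors. The heart of the argument is to find vertex pairs all of whose ``shortcut'' connections have length $<k$, so that the only admissible path is a long detour through $A$ (or through $A\cup e$), which is then forced to be almost rainbow. Relabelling $B=w_0w_1\cdots w_{2k-1-t}$ with $w_0=v_0$, $w_{2k-1-t}=v_t$, the pairs $\{w_i,w_{i+k-1}\}$ for $1\le i\le k-t$ are of this type --- the $B$-path has $k-1$ edges, the detour through $e$ has $k-t+1\le k-1$ edges (here $t\ge2$ is used), and the detour through $A$ has exactly $k$ edges --- so that detour must be a rainbow $k$-path, forcing the $t$ edges of $A$ to get $t$ distinct colors avoiding the colors of the $B$-edges $w_0w_1,\dots,w_{i-1}w_i$ and $w_{i+k-1}w_{i+k},\dots,w_{2k-2-t}w_{2k-1-t}$. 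Letting $i$ vary, and adding the analogous information from the ``antipodal'' pairs $\{w_a,w_b\}$ of the cycle $C_2:=B\cup\{e\}$ whose $C_2$-distance falls in the unavoidable gap $\{k-t+1,\dots,k-1\}$ and from the mixed pairs $\{v_s,w_j\}$ with $v_s$ interior to $A$, one shows that $A$ is rainbow on $t$ colors essentially disjoint from those of $B$, while $B$ is either fully rainbow or is compensated by a new color on $A$ or $e$; in all cases the colors split so that the total is at least $2k-t$.

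\emph{Upper bound.} I would exhibit the coloring suggested by this analysis: color $B$ rainbow with $1,\dots,2k-1-t$, introduce one new color $2k-t$, and color $e$ and the $t$ edges of $A$ using $2k-t$ together with suitably chosen recycled ``central'' $B$-colors, arranged so that $A$ (or $A\cup e$) is a rainbow path meeting the neighbouring $B$-colors minimally. This uses $2k-t$ colors, and one then verifies that every pair is joined by a path with at least $k$ colors: a pair on $B$ at $B$-distance $\ge k$ uses a rainbow subpath of $B$; one at $B$-distance $\le k-t$ uses the short detour through $e$; and every remaining pair (gap-distance pairs on $B$, pairs inside $A$, mixed pairs) uses a detour through $A$ whose color set is the disjoint union of an initial rainbow block of $B$, the rainbow block of $A$ (possibly with $e$), and a terminal rainbow block of $B$, three blocks whose sizes total at least $k$.

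The main obstacle is the lower-bound bookkeeping. The mechanism --- ``a shortcut-free pair forces a rainbow detour through $A$, and the detour can recycle only a tightly controlled set of $B$-colors'' --- is conceptually simple, but squeezing out the exact equality $cc_k(C_{2k-1}^{ct})=2k-t$ for every admissible $t$ is delicate: one must make sure the chosen pairs really have all shortcuts shorter than $k$, track precisely which $B$-edges each detour reuses, and treat separately the small-$t$ regime (where $B$ need not be rainbow, and the extra color must come from $e$ or $A$) and the mixed pairs $\{v_s,w_j\}$, where several detours of nearly equal length compete and must be compared edge by edge.
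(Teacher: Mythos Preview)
This theorem is not proved in the paper; it is quoted from \cite{CLMMP} as background (see the sentence immediately preceding Theorem~\ref{thm1.2}), so there is no in-paper argument to compare your attempt against.

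Assessed on its own merits, what you have written is an outline rather than a proof, and you say as much. The architecture is reasonable --- view $C_{2k-1}^{ct}$ as the theta-graph on strands of lengths $1,t,2k-1-t$; for the lower bound, locate vertex pairs whose only path of length $\ge k$ is a detour through $A$, hence forced to be rainbow; for the upper bound, colour $B$ rainbow and recycle some of its colours on $A$ and $e$ --- but neither direction is actually carried out. On the lower-bound side, your family $\{w_i,w_{i+k-1}\}$ does show that $A$ is rainbow and that its colours avoid certain $B$-edges, yet as $i$ varies these constraints overlap without covering: nothing you have written rules out two $B$-edges at cycle-distance $\ge k$ sharing a colour, and the ``antipodal'' and ``mixed'' pairs you invoke to close the gap are exactly where the content lies, and that content is absent. (A cleaner line, in the spirit of the paper's later proof of Theorem~\ref{thm4.1}, is to fix a run of $2k-t$ consecutive cycle-edges straddling the short arc and argue directly that any two must receive different colours; but even there one has to dispose of the paths through the chord, and that case analysis is likewise not written down.) On the upper-bound side, ``suitably chosen recycled central $B$-colours'' is not a colouring; a careless choice fails precisely on the gap-distance and mixed pairs you yourself flag, so the verification cannot be deferred. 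In short: right shape, genuine gaps in both halves.
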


Recall that a block of a connected graph is a vertex-maximal 2-connected subgraph.

\begin{lem}\label{lem1.3}
Given a connected graph $G$ consisting of blocks $B_1,B_2, \ldots, B_t$. If $cc_{k}(G)$ exists, then $cc_{k}(G)=\max_{i} cc_{k}(B_i)$. Moreover, if $cc_{k}(B_i)$ does not exist for some $i$, then $cc_{k}(G)$ also does not exist.
\end{lem}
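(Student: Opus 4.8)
The whole argument would rest on one standard structural fact about blocks (see, e.g., \cite{BM}): any path of $G$ meets a given block $B$ in a (possibly empty) sub-path; in particular, if $u,v\in V(B)$ then every $u$-$v$ path of $G$ lies entirely in $B$. Indeed, if a path entered $B$, left it, and returned, the detour would be an ``ear'' attached to $B$ at two distinct vertices, so $B$ together with that detour would still be $2$-connected, contradicting the maximality of $B$. I will also use the familiar fact that the edge sets $E(B_1),\dots,E(B_t)$ partition $E(G)$.

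First I would settle the ``moreover'' part, since its contrapositive is what makes the displayed equality meaningful. Suppose $cc_{k}(B_i)$ does not exist for some $i$; then \emph{no} edge-coloring of $B_i$ makes $B_i$ $k$-color connected. Given an arbitrary edge-coloring $c$ of $G$, its restriction to $B_i$ must then fail, so there are $u,v\in V(B_i)$ admitting no path inside $B_i$ that uses at least $k$ colors; by the block fact, \emph{every} $u$-$v$ path of $G$ lies in $B_i$, so under $c$ no $u$-$v$ path of $G$ uses $\ge k$ colors. As $c$ was arbitrary, $cc_{k}(G)$ does not exist. Hence, whenever $cc_{k}(G)$ exists, every $cc_{k}(B_i)$ exists as well, and the right-hand side of the equality is well defined.

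Now assume $cc_{k}(G)$ exists and put $N=\max_i cc_{k}(B_i)$. For $cc_{k}(G)\ge N$ I would take an optimal $k$-color connection coloring of $G$, restrict it to a block $B_i$, and observe that for $u,v\in V(B_i)$ the guaranteed $u$-$v$ path of $G$ using $\ge k$ colors already lies in $B_i$; so the restriction is a $k$-color connection coloring of $B_i$ with at most $cc_{k}(G)$ colors, giving $cc_{k}(B_i)\le cc_{k}(G)$ for every $i$, i.e.\ $N\le cc_{k}(G)$. For the reverse inequality I would color the edges of each block $B_i$ according to a fixed optimal $k$-color connection coloring of $B_i$, with colors drawn from $\{1,\dots,cc_{k}(B_i)\}\subseteq\{1,\dots,N\}$; this is well defined because the $E(B_i)$ partition $E(G)$, and it uses at most $N$ colors. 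To verify $k$-color connectivity, fix $u,v\in V(G)$. If $u$ and $v$ lie in a common block, a $u$-$v$ path with $\ge k$ colors already exists inside that block. Otherwise let $P$ be any $u$-$v$ path of $G$; the block $B$ containing its first edge contains $u$ but not $v$, so $P$ meets $B$ in a proper initial sub-path $P[u,c]$ with $c\ne u$, and $P[c,v]$ meets $V(B)$ only in $c$. Inside $B$ pick a $u$-$c$ path $Q$ using $\ge k$ colors; since $Q\subseteq B$ and $P[c,v]$ meets $V(B)$ only in $c$, the concatenation $Q\cup P[c,v]$ is again a $u$-$v$ path, and it already uses $\ge k$ colors on $Q$. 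Hence $cc_{k}(G)\le N$, and the equality follows.

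The routine ingredients are the two standard block facts and a little color bookkeeping. The only point demanding care is the last step: one must check that $Q\cup P[c,v]$ is a genuine path rather than a mere walk, which is exactly where ``a path meets a block in a sub-path'' is invoked. The real content of the lemma — and essentially the only conceptual hurdle — is recognizing that $k$-color connectivity is \emph{block-local}: every path that could witness $k$-color connectivity for a pair of vertices in a common block is forced to remain inside that block, after which both inequalities and the non-existence statement drop out quickly.
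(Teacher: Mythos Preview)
The paper does not actually include a proof of Lemma~\ref{lem1.3}; it is stated without argument (sitting among results quoted from \cite{CLMMP}) and is simply used thereafter. So there is no ``paper's own proof'' to compare against.

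Your proposal is a correct and self-contained proof. The structural fact you rely on---that for a block $B$ and $u,v\in V(B)$ every $u$--$v$ path of $G$ lies in $B$---is exactly what is needed, and your ear argument justifies it (note that in the context of this paper $k\ge 2$ forces every block of a graph with $cc_k$ defined to be $2$-connected, so the ear argument applies; and if some block is a bridge $K_2$ then $cc_k(B_i)$ fails and your ``moreover'' paragraph handles it). Both inequalities are argued cleanly: the lower bound by restriction, the upper bound by gluing optimal block colorings and splicing a $k$-color path inside the first block onto the tail of an arbitrary $u$--$v$ path. The one place that could trip a reader---checking that $Q\cup P[c,v]$ is a genuine path---you flag explicitly and handle via $V(Q)\subseteq V(B)$ and $V(P[c,v])\cap V(B)=\{c\}$. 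Nothing is missing.
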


By Lemma \ref{lem1.3}, determining k-color connection number of a graph is ascribed to that of a 2-connected graph. In \cite{CLMMP}, the authors derived the following results for 2-connected graphs.

\begin{thm}\label{thm1.4}
If $G$ be a 2-connected graph containing a subgraph $H$ with $cc_{k}(H)=\ell$, then $cc_{k}(G)\leq\ell$.
\end{thm}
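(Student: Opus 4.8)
The plan is to construct a $k$-color connection coloring of $G$ that uses only $\ell$ colors, by starting from an optimal such coloring of a suitable $2$-connected subgraph and then adjoining the remaining edges of $G$ one ``ear'' at a time, never introducing a new color. First I would reduce to the case where $H$ itself is $2$-connected: by Lemma~\ref{lem1.3} all of $cc_k(B_1),\dots,cc_k(B_t)$ exist, where $B_1,\dots,B_t$ are the blocks of $H$, and $\ell=cc_k(H)=\max_i cc_k(B_i)$, so some block $B$ has $cc_k(B)=\ell$; since $k\ge 2$, $B$ is a genuine $2$-connected block, and $B\subseteq H\subseteq G$. Hence it is enough to prove $cc_k(G)\le cc_k(B)$, and we may assume $H$ is $2$-connected. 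Since $G$ is $2$-connected and $H$ is a $2$-connected subgraph, $G$ has an ear decomposition relative to $H$: a chain $H=G_0\subseteq G_1\subseteq\cdots\subseteq G_r=G$ in which each $G_i$ arises from $G_{i-1}$ by attaching an ear $P_i=x\,v_1v_2\cdots v_s\,y$, a path with $x\ne y$ in $V(G_{i-1})$ and $v_1,\dots,v_s\notin V(G_{i-1})$ (an ear of length $1$ is just a new chord $xy$). This relative ear decomposition is standard: take a maximal $2$-connected subgraph $H'$ with $H\subseteq H'\subseteq G$ admitting such a decomposition, and use the $2$-connectedness of $G$ to attach one more ear if $H'\ne G$.

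Then I would prove by induction on $i$ that $G_i$ has a $k$-color connection coloring using only the colors $1,\dots,\ell$; for $i=0$ this is the given optimal coloring of $H$. For the inductive step, keep the coloring of $G_{i-1}$ and assign color $1$ to every edge of the new ear $P_i$. Two vertices that both lie in $G_{i-1}$ are still joined, inside $G_{i-1}$, by a path using at least $k$ colors. For an internal vertex $v_p$ of $P_i$ and a vertex $w\in V(G_{i-1})$, use $v_pv_{p-1}\cdots v_1\,x$ followed by a path in $G_{i-1}$ from $x$ to $w$ with at least $k$ colors when $w\ne x$, and use $v_pv_{p+1}\cdots v_s\,y$ followed by a path in $G_{i-1}$ from $y$ to $x$ with at least $k$ colors when $w=x$. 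For two internal vertices $v_a$ and $v_b$ with $a<b$, use $v_av_{a-1}\cdots v_1\,x$, then a path in $G_{i-1}$ from $x$ to $y$ with at least $k$ colors, then $y\,v_sv_{s-1}\cdots v_b$. In each case the portions lying on the ear contribute only color $1$ while the portion inside $G_{i-1}$ already uses at least $k$ colors, and the portions are internally disjoint because $v_1,\dots,v_s$ lie outside $G_{i-1}$; so each walk displayed is actually a simple path using at least $k$ colors. Taking $i=r$ yields $cc_k(G)\le\ell$.

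The argument is short, and I do not expect a genuine obstacle — the only points needing care are structural: justifying that one may take $H$ to be $2$-connected and then ``peel'' $G$ down to $H$ by ears, and checking that each concatenated walk written above is in fact a simple path, which holds exactly because the interior vertices of each ear are new. The crucial feature is that every edge outside $H$ can be given an already-used color, and this is precisely what makes the bound come out as $cc_k(G)\le\ell$ rather than something larger.
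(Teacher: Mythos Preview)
Your argument is correct. The paper does not supply its own proof of Theorem~\ref{thm1.4}; it is quoted from \cite{CLMMP}, so there is no in-paper proof to compare against. Your approach---reducing to a $2$-connected $H$ via Lemma~\ref{lem1.3}, then building $G$ from $H$ by an ear decomposition and coloring every new ear with a single already-used color---is the standard and expected route, and each case check (two old vertices, one old and one new, two new) is handled properly. The only small remark is that the reduction step, while sound, is in fact unnecessary for the bound: any block $B$ of $H$ satisfies $cc_k(B)\le \ell$, so picking \emph{any} $2$-connected block (not specifically one attaining the maximum) already yields $cc_k(G)\le cc_k(B)\le \ell$ by your ear argument.
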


Combing Theorem \ref{thm1.1} and Theorem \ref{thm1.4}, one can easily obtain the following corollary.

\begin{cor}\label{cor1.5}\cite{CLMMP}
If $G$ is 2-connected and contains a cycle of length at least $2k$, then $cc_{k}(G)=k$.
\end{cor}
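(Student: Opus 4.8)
The plan is to derive the statement directly by combining Theorem~\ref{thm1.1} with Theorem~\ref{thm1.4}, so the argument is essentially a two-line chaining of results already in hand rather than anything that requires new ideas. First I would use the hypothesis that $G$ contains a cycle of length $n\ge 2k$: viewing this cycle as a subgraph $H=C_n$ of $G$, Theorem~\ref{thm1.1} tells us that $cc_k(C_n)=k$ since $n\ge 2k$. Hence $G$ is a $2$-connected graph that contains a subgraph $H$ with $cc_k(H)=k$.

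Next I would apply Theorem~\ref{thm1.4} with $\ell=k$. Since $G$ is $2$-connected and contains the subgraph $H$ with $cc_k(H)=k=\ell$, the theorem yields $cc_k(G)\le k$; in particular $cc_k(G)$ exists. To finish, I would invoke the general lower bound $cc_k(G)\ge k$ recorded in the introduction, which holds for every connected graph for which $cc_k(G)$ is defined. Putting $cc_k(G)\le k$ together with $cc_k(G)\ge k$ gives $cc_k(G)=k$, as claimed.

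I do not anticipate a genuine obstacle here; the only points worth checking carefully are the verifications that the hypotheses of Theorem~\ref{thm1.4} are literally met, namely that the promised long cycle really occurs as a subgraph of $G$ (so that Theorem~\ref{thm1.1} applies to it), and that $2$-connectivity of $G$ guarantees $G$ is bridgeless, so that $cc_k(G)$ is well defined and the lower bound $cc_k(G)\ge k$ may legitimately be quoted. Once these are noted, the chain of inequalities closes immediately.
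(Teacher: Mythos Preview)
Your proposal is correct and matches the paper's intended argument: the paper explicitly states that this corollary follows by combining Theorem~\ref{thm1.1} and Theorem~\ref{thm1.4}, which is exactly the chain $cc_k(C_n)=k$ for $n\ge 2k$, hence $cc_k(G)\le k$, together with the trivial lower bound $cc_k(G)\ge k$. Your added remarks about checking the hypotheses are fine but not strictly necessary.
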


In \cite{CLMMP}, the authors also investigated k-connection numbers of two special graph classes: the wheel $W_n$ and the complete graph $K_n$.

\begin{thm}\label{thm1.6}
For $1 \leq k\leq n$ and $n\geq 3$, $cc_{k}(W_n)=k$.
\end{thm}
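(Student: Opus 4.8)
The plan is as follows. The lower bound $cc_k(W_n)\ge k$ is immediate, since $cc_k(G)\ge k$ holds for every connected graph $G$ for which $cc_k(G)$ exists; so only the upper bound $cc_k(W_n)\le k$ needs work. Write the wheel as $W_n=C_n+K_1$ with hub $u$ and rim vertices $v_1,\dots,v_n$ listed cyclically, denote the rim edges by $e_i=v_iv_{i+1}$ and the spokes by $f_i=uv_i$ (indices read modulo $n$). It is convenient to split the argument according to whether $n$ is large or small relative to $k$.

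If $n\ge 2k-1$, then $W_n$ is $2$-connected and contains the cycle $u,v_1,v_2,\dots,v_n,u$ of length $n+1\ge 2k$ (for $n\ge 2k$ one may even take the rim cycle $C_n$ itself). By Corollary~\ref{cor1.5} — equivalently, by combining Theorem~\ref{thm1.1} with Theorem~\ref{thm1.4} — we get $cc_k(W_n)=k$ at once. This already disposes of every case with $k\le 2$, since then $n\ge 3=2k-1$; the remaining range $k\le n\le 2k-2$ is nonempty only for $k\ge 3$.

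For $k\le n\le 2k-2$ I would exhibit an explicit $k$-edge-coloring of $W_n$ and check directly that it is $k$-color connected. A natural candidate colours $e_i$ with the representative of $i$ modulo $k$ in $\{1,\dots,k\}$ and $f_i$ with the representative of $i-1$ modulo $k$; since $n\ge k$, the rim edges $e_1,\dots,e_k$ already exhaust all $k$ colours, so exactly $k$ colours are used. The point of this choice is that the only colour which might be lost when one deletes a single rim edge $e_i$ — namely the colour $i$, which occurs on exactly one rim edge precisely when $n-k+1\le i\le k$ — is reinstated on the incident spoke $f_{i+1}$, whose colour is congruent to $i$ modulo $k$. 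One then verifies $k$-color connectivity over the three types of vertex pairs: for $\{u,v_i\}$ use the path $u,v_{i+1},v_{i+2},\dots,v_{i-1},v_i$, which traverses every rim edge except $e_i$ together with the spoke $f_{i+1}$; for adjacent rim vertices $\{v_i,v_{i+1}\}$ use $v_i,v_{i-1},\dots,v_{i+2},u,v_{i+1}$, which traverses every rim edge except $e_i,e_{i+1}$ together with the spokes $f_{i+1},f_{i+2}$; and for non-adjacent rim vertices $\{v_i,v_j\}$ use a path of the form $v_i,v_{i-1},\dots,v_{j+1},u,v_{i+1},v_{i+2},\dots,v_j$, which traverses every rim edge except $e_i,e_j$ together with the spokes $f_{i+1},f_{j+1}$. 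In each case the rim part of the path already contributes all colours except possibly those on the one or two omitted rim edges, and the chosen spoke(s) supply exactly the missing colour(s), so the path uses all $k$ colours.

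The routine but fiddly part will be the bookkeeping in this last step: one must handle the cyclic indices carefully (in particular the wrap-around situations $i=1$, $i=n$, $j=n$, and the degenerate case $n=k$, where $v_{k+1}=v_1$), and one must confirm in each subcase that a colour dropped from the rim part is genuinely recovered by one of the spokes used — which follows from the defining congruences $c(e_i)\equiv i\pmod{k}$ and $c(f_i)\equiv i-1\pmod{k}$. I expect this verification to be the main obstacle, but it is entirely elementary; the only genuinely structural inputs are Corollary~\ref{cor1.5} for $n\ge 2k-1$ and, for $n\le 2k-2$, the observation that in $W_n$ one can always route a path covering all but at most two prescribed rim edges together with one or two spokes of one's own choosing.
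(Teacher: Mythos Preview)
The paper does not actually prove Theorem~\ref{thm1.6}: it is quoted from \cite{CLMMP} as a known result and used as a black box (for instance in Corollary~\ref{cor2.4}). So there is no in-paper proof to compare your proposal against.

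That said, your argument is a correct self-contained proof. The split at $n\ge 2k-1$ is clean and your appeal to Corollary~\ref{cor1.5} via the Hamilton cycle of $W_n$ of length $n+1$ is exactly right. For $k\le n\le 2k-2$ your explicit colouring $c(e_i)\equiv i$, $c(f_i)\equiv i-1\pmod{k}$ does the job: in each of the three path templates the rim portion omits at most two rim edges $e_i$ (and $e_j$), and the spokes $f_{i+1}$ (and $f_{j+1}$) you include carry precisely the colours $i$ (and $j$) that might have been dropped. The templates are genuine paths (the two rim segments in the non-adjacent case partition the rim vertices, with $u$ inserted between them), and the boundary cases $n=k$ and $n=3$ go through without change. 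The bookkeeping you flag as ``fiddly'' is indeed routine; nothing is missing.
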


\begin{cor}\label{cor1.7}
For $k\geq3$, for all $n\geq k+1$, $cc_{k}(K_n)=k$.
\end{cor}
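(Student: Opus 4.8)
The plan is to establish the two inequalities $cc_{k}(K_n)\geq k$ and $cc_{k}(K_n)\leq k$ separately. The lower bound needs no work: as observed in the introduction, any edge-coloring witnessing $k$-color connectivity must use at least $k$ colors, since a path displaying $k$ distinct colors has at least $k$ edges. Hence $cc_{k}(K_n)\geq k$ whenever the parameter is defined, in particular for all $n\geq k+1$.

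For the upper bound, the idea is to locate inside $K_n$ a subgraph whose $k$-color connection number is already pinned down to $k$, and then quote Theorem~\ref{thm1.4}. Since $n\geq k+1\geq 4$, the graph $K_n$ is $2$-connected, so Theorem~\ref{thm1.4} is applicable. The natural candidate subgraph is the wheel $W_{n-1}$: take any vertex $v$ of $K_n$ as the hub, and use a Hamilton cycle of the clique $K_{n-1}$ spanned by the remaining $n-1\geq 3$ vertices as the rim; all rim edges and all hub-to-rim edges are present in $K_n$, so $W_{n-1}\subseteq K_n$. Because $n\geq k+1$ gives $k\leq n-1$ and $n-1\geq 3$, Theorem~\ref{thm1.6} yields $cc_{k}(W_{n-1})=k$. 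Then Theorem~\ref{thm1.4} gives $cc_{k}(K_n)\leq cc_{k}(W_{n-1})=k$, and combined with the lower bound this proves $cc_{k}(K_n)=k$.

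There is essentially no real obstacle here; the only points to verify are the two parameter inequalities ($n-1\geq 3$, so that $W_{n-1}$ is defined, and $k\leq n-1$, so that Theorem~\ref{thm1.6} applies), and both follow at once from $k\geq 3$ and $n\geq k+1$. As an alternative that sidesteps the wheel, for $n\geq 2k$ one could instead observe that $K_n$ contains a cycle of length at least $2k$ and invoke Corollary~\ref{cor1.5} directly; the remaining range $k+1\leq n\leq 2k-1$ would still require the $W_{n-1}$ argument, so the uniform route through Theorem~\ref{thm1.6} and Theorem~\ref{thm1.4} is the cleaner one to present.
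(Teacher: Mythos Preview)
Your proof is correct. The paper itself does not supply a proof of Corollary~\ref{cor1.7}; it is quoted from \cite{CLMMP} alongside Theorem~\ref{thm1.6}, and the label ``Corollary'' signals exactly the derivation you carry out---embed $W_{n-1}$ in $K_n$ and apply Theorem~\ref{thm1.6} together with Theorem~\ref{thm1.4}.
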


\section{Subdivided graphs}

An edge $e=uv$ of a graph $G$ is said to be subdivided when it is deleted and replaced by the edges $uw$ and $vw$ for some vertex $w$ not in $G$. A subdivision of a graph is a graph that can be obtained from $G$ by a sequence of edge subdivisions. The notion of the subdivision plays the key role in our main results.

The following lemma will be preliminary and useful in our discussion.

\begin{lem}\label{lem2.1}
Let $G'$ be the subdivision of a graph $G$ by subdivided an edge, then $cc_{k}(G')\leq cc_{k}(G)$.
\end{lem}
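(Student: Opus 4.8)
The plan is to start from an optimal $k$-color connection coloring $c$ of $G$ using $cc_k(G)=\ell$ colors, and to transfer it to $G'$. Let $e=uv$ be the edge of $G$ that gets subdivided, replaced in $G'$ by the path $u\text{-}w\text{-}v$ through the new vertex $w$. First I would define a coloring $c'$ of $E(G')$ by setting $c'(xy)=c(xy)$ for every edge $xy$ of $G$ that survives in $G'$ (i.e. every edge other than $e$), and $c'(uw)=c'(wv)=c(e)$; so $c'$ also uses at most $\ell$ colors. The point of giving both new edges the color of $e$ is that any path in $G'$ is obtained from a walk/path in $G$ by replacing occurrences of $e$ with the two-edge path $u\text{-}w\text{-}v$ (and vice versa), and this replacement does not change the \emph{set} of colors appearing.

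The main verification is then: every pair of vertices of $G'$ is joined by a path using at least $k$ colors under $c'$. I would split into cases according to whether the pair involves the new vertex $w$. If $x,y\in V(G)$, take a path $P$ in $G$ from $x$ to $y$ with $|c(P)|\geq k$; if $P$ does not use $e$, then $P$ is still a path in $G'$ and its color set is unchanged, so we are done; if $P$ uses $e$, replace that single edge by $u\text{-}w\text{-}v$ to get a path $P'$ in $G'$ from $x$ to $y$ with $c'(P')=c(P)$, hence $|c'(P')|\geq k$. For a pair of the form $w$ and $y\in V(G)$: pick a neighbour of $w$, say $u$, and a path $Q$ in $G$ from $u$ to $y$ with $|c(Q)|\geq k$; if $Q$ avoids $e$ then $wu$ followed by $Q$ is a path in $G'$ — here I must be slightly careful that prepending the edge $wu$ keeps it a path, which holds provided $Q$ does not pass through $w$ (it cannot, $w\notin V(G)$) and, to be safe, I can instead route to whichever of $u,v$ is not an internal vertex of $Q$, or simply note $|c'(wu)\cup c'(Q)|\geq|c(Q)|\geq k$ and that a path from $w$ to $y$ inside the edge set $\{wu\}\cup E(Q)$ exists with at least this color count by taking a shortest such subpath, which only drops colors we do not need; if $Q$ uses $e$, first perform the edge replacement as above and then note $w$ already lies on the resulting path, so the sub-path of it from $w$ to $y$ works and still meets both $u$ and the colors of $Q$ appropriately.

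The one genuinely delicate point — the step I expect to be the main obstacle — is the bookkeeping in the case involving $w$: when I prepend an edge at $w$ or pass to a sub-path, I must make sure the resulting object is a genuine path (no repeated vertices) and that it still carries $\geq k$ colors. The clean way around this is the observation that a path from $x$ to $y$ using $\geq k$ colors exists as soon as there is \emph{any} $x$–$y$ walk using $\geq k$ colors, since a walk contains a path between its endpoints and we only ever need to produce \emph{some} path meeting the threshold — wait, that is false in general (a subpath can lose colors), so instead I will argue directly: in each case I exhibit an explicit $x$–$y$ path in $G'$ whose edge-color set \emph{equals} $c(P)$ for a suitable $G$-path $P$ with $|c(P)|\geq k$, by using that the edge-replacement $e\leftrightarrow u\text{-}w\text{-}v$ is color-preserving and turns $G$-paths through $e$ into $G'$-paths through $w$ bijectively, and $G$-paths avoiding $e$ into $G'$-paths avoiding $w$. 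For the pairs involving $w$ I choose $P$ to be a $u$–$y$ or $v$–$y$ path through $e$ when one exists (then $w$ is interior to $P'$ and I take the $w$–$y$ portion, which still contains the relevant monochromatic segment of color $c(e)$ at one end, so only possibly color $c(e)$ is at risk — and I can add it back via the other new edge if needed); if every optimal path from $u$ and from $v$ to $y$ avoids $e$, a short separate argument using $2$-edge-connectedness of $G$ (guaranteed since $cc_k(G)$ exists) shows one can reroute through $e$. This finishes the proof that $c'$ is a $k$-color connection coloring of $G'$ with $\ell$ colors, hence $cc_k(G')\leq\ell=cc_k(G)$.
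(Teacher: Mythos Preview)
Your approach is the same as the paper's: define $c'$ by giving both new edges the color $c(uv)$ and keeping all other colors, then verify $k$-color connectedness by lifting paths from $G$. Your treatment of pairs $x,y\in V(G)$ is exactly the paper's argument and is correct.

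Where you diverge is in the pair $(w,y)$, and there you make the argument much harder than it needs to be. The clean version (essentially what the paper does) is: assume $y\neq u$ (if $y=u$, swap the roles of $u$ and $v$), take a path $Q$ in $G$ from $u$ to $y$ with $|c(Q)|\geq k$, and split into two cases. If $uv\notin E(Q)$, then $Q\subseteq G'$ and $w\notin V(Q)$ (since $w\notin V(G)$), so $wu\cup Q$ is a genuine path in $G'$ with color set $c(Q)\cup\{c(uv)\}\supseteq c(Q)$. If $uv\in E(Q)$, then because $Q$ is a simple path starting at $u$, the edge $uv$ must be its \emph{first} edge, so $Q=u,v,\ldots,y$; hence $w,v,\ldots,y$ is a path in $G'$ with exactly the color set $c(Q)$ (we traded the edge $uv$ for $wv$, and $c'(wv)=c(uv)$). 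That is the entire case.

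Your worries about ``whichever of $u,v$ is not an internal vertex of $Q$'' and about subpaths losing colors are unnecessary here, and your final paragraph's fallback --- ``if every optimal path from $u$ and from $v$ to $y$ avoids $e$, a short separate argument using $2$-edge-connectedness \ldots\ shows one can reroute through $e$'' --- is both unneeded and unjustified: there is no reason a rerouted path through $e$ should still use at least $k$ colors, and you never need such a rerouting since the ``$Q$ avoids $e$'' case is already settled by simply prepending $wu$. Strip out those detours and your proof coincides with the paper's.
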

\begin{proof}
Suppose that the subdivided edge $uv$ is replaced by the edges $uw$ and $vw$. Let $c$ be a $k$-color connection coloring of $G$ with $cc_{k}(G)$ colors. Next, we provide an edge-coloring $c'$ of $G'$ defined by
\begin{equation*}
c'(e)=
\left\{
\begin{array}{ll}
    c(e) & \hbox{ if  $e\notin\{uw,vw\}$,}\\
    c(uv) & \hbox{ if  $e\in\{uw,vw\}$.}\\
\end{array}
\right.
\end{equation*}

It remains to prove that $G'$ is $k$-color connected under this coloring $c'$. It is sufficient to show that for any two vertices $x,y$ of $G'$, there exists a path using at least $k$ different colors. If $w\notin \{x,y\}$, note that there exists a path $P_1$, between $x$ and $y$, using at least $k$ colors in $G$ under the coloring $c$. Thus, we only need to follow the path $P_1$ in $G'$ under the coloring $c'$, but if $uv\in E(P_1)$, then the needed path is $xP_1uwvP_1y$ or $xP_1vwuP_1y$. If $w\in \{x,y\}$, without loss of generality, assume that $x=w$ and $y\neq u$. It is obtained that there exists a path $P_2$, between $y$ and $u$, using at least $k$ colors in $G$ under the coloring $c$. Therefore, we can use the path $P_2\cup uw$ in $G'$ under the coloring $c'$. Consequently, $cc_{k}(G')\leq cc_{k}(G)$.
\end{proof}

The following result is an immediate consequence of Lemma \ref{lem2.1} by proceeding with a sequence of edge subdivisions.

\begin{thm}\label{thm2.2}
Let $H$ be a subdivision of a connected graph $G$, then $cc_{k}(H)\leq cc_{k}(G)$.
\end{thm}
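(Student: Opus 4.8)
The plan is to obtain Theorem~\ref{thm2.2} as a direct induction from Lemma~\ref{lem2.1}. Recall that a subdivision $H$ of $G$ is, by definition, a graph obtained from $G$ by a finite sequence of single-edge subdivisions; say there are $s\geq 0$ such operations, producing a chain
$$G=G_0\to G_1\to\cdots\to G_s=H,$$
where each $G_{i+1}$ is the subdivision of $G_i$ by subdividing a single edge. The base case $s=0$ is trivial since then $H=G$. For the inductive step, note first that each $G_i$ is connected: subdividing an edge of a connected graph yields a connected graph, since the new degree-$2$ vertex keeps its two former endpoints joined by a path of length two. Hence Lemma~\ref{lem2.1} applies at every stage and gives $cc_k(G_{i+1})\leq cc_k(G_i)$. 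Chaining these inequalities along the whole sequence yields $cc_k(H)=cc_k(G_s)\leq cc_k(G_{s-1})\leq\cdots\leq cc_k(G_0)=cc_k(G)$, which is the claim.

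One subtlety worth a sentence in the write-up is the interaction with the existence of the parameter. If $cc_k(G)$ exists (i.e.\ $G$ is $2$-edge-connected), then each single subdivision preserves $2$-edge-connectivity, so $cc_k(G_i)$ exists for all $i$ and Lemma~\ref{lem2.1} is applicable verbatim; the inequality $cc_k(H)\le cc_k(G)$ is then a genuine numerical inequality. If $cc_k(G)$ does not exist, the statement $cc_k(H)\leq cc_k(G)$ should be read as vacuous (or with the convention that $\mathrm{DNE}$ dominates nothing), consistent with the conventions used around Theorem~\ref{thm1.1} and Lemma~\ref{lem1.3}; indeed subdividing a bridge cannot create $2$-edge-connectivity, so $cc_k(H)$ also fails to exist.

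I do not anticipate a real obstacle here: the content is entirely front-loaded into Lemma~\ref{lem2.1}, and Theorem~\ref{thm2.2} is just its transitive closure along the defining sequence of subdivisions. The only thing to be careful about is to phrase the induction on the \emph{number of subdivision operations} rather than, say, on $|V(H)|-|V(G)|$ (these coincide here, but the operation count is the cleaner induction parameter), and to explicitly invoke connectedness of the intermediate graphs $G_i$ so that Lemma~\ref{lem2.1} legitimately applies at each step.

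\begin{proof}
By definition, $H$ is obtained from $G$ by a finite sequence of single-edge subdivisions, say $G=G_0\to G_1\to\cdots\to G_s=H$, where $G_{i+1}$ is the subdivision of $G_i$ obtained by subdividing one edge. Each $G_i$ is connected, since subdividing an edge of a connected graph preserves connectedness. Hence Lemma~\ref{lem2.1} applies to each step and gives $cc_k(G_{i+1})\leq cc_k(G_i)$ for $0\leq i\leq s-1$. Combining these inequalities,
$$cc_k(H)=cc_k(G_s)\leq cc_k(G_{s-1})\leq\cdots\leq cc_k(G_0)=cc_k(G),$$
as desired.
\end{proof}
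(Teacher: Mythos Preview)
Your proof is correct and follows exactly the approach the paper intends: the paper states Theorem~\ref{thm2.2} as an immediate consequence of Lemma~\ref{lem2.1} ``by proceeding with a sequence of edge subdivisions,'' which is precisely the induction you spell out. Your write-up is, if anything, more careful than the paper's one-line remark.
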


It is well known that the wheel $W_n$ is defined as $C_n+K_1$, the join of $C_n$ and $K_1$, constructed by joining a new vertex to every vertex of $C_n$. We say that the wheel-like graph $W_n^t$ is a graph obtained from $C_n$ by joining $t$ ($t\leq n$) disjoint paths from a new vertex $v_0$ to $t$ distinct vertices of $C_n$ with the same end-vertex $v_0$.

\begin{cor}\label{cor2.4}
For $3\leq k \leq t \leq n$, $cc_{k}(W_n^t)=k$.
\end{cor}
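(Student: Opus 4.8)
The plan is to establish the matching lower and upper bounds. For the lower bound, $cc_k(W_n^t)\geq k$ is automatic, since $cc_k(G)\geq k$ holds for every connected graph for which the parameter is defined. So the whole content is the upper bound $cc_k(W_n^t)\leq k$, and the natural strategy is to exhibit $W_n^t$ as a subdivision of a graph whose $k$-color connection number is already known to be $k$, and then invoke Theorem~\ref{thm2.2}.

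The key observation is that $W_n^t$ is obtained from the wheel-like structure $C_n+K_1$ restricted to $t$ spokes — more precisely, consider the graph $W_n^{t,0}$ consisting of the cycle $C_n$ together with $t$ single edges (length-one paths) from a new vertex $v_0$ to $t$ distinct vertices of $C_n$. Replacing each such edge by a path of arbitrary length exactly produces a general $W_n^t$, so $W_n^t$ is a subdivision of $W_n^{t,0}$. Hence by Theorem~\ref{thm2.2} it suffices to prove $cc_k(W_n^{t,0})=k$. Now I would argue that $W_n^{t,0}$ is $2$-connected (when $t\geq 2$; and $t\geq k\geq 3$ here) and contains a long cycle: indeed, pick two of the $t$ spoke-endpoints on $C_n$, say $u$ and $w$; the longer of the two arcs of $C_n$ between $u$ and $w$ together with the two spokes $v_0u$ and $v_0w$ forms a cycle through $v_0$. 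One must check this cycle has length at least $2k$; if the chosen pair of endpoints does not give a long enough arc, I would instead observe that among the $t\geq k$ spoke-endpoints on $C_n$ one can always find two that are far apart along $C_n$, giving an arc of length at least $\lceil n/t\rceil$ — but this may not be enough when $n$ is small relative to $t$. The cleaner route is: the whole cycle $C_n$ has length $n\geq t\geq k$, and if $n\geq 2k$ we are done immediately by Corollary~\ref{cor1.5}; the remaining case $k\leq n\leq 2k-1$ has to be handled separately.

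For the residual small-$n$ case I would argue directly. When $t$ spokes are attached to $C_n$ with $n\leq 2k-1$, one can build a cycle of length $n+2$ by detouring through $v_0$ between two adjacent spoke-endpoints, a cycle of length $n+4$ through two endpoints at distance $2$, and so on; combined with the arc-routing through $v_0$, one produces cycles of all lengths from $n+2$ up to roughly $n+2t$, and since $t\geq k$ and $n\geq k$ this range reaches $2k$. So $W_n^{t,0}$ contains a cycle of length at least $2k$ whenever $t\geq k$, regardless of $n$; then Corollary~\ref{cor1.5} gives $cc_k(W_n^{t,0})=k$, and Theorem~\ref{thm2.2} transfers this to $cc_k(W_n^t)\leq k$, completing the proof.

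The main obstacle I anticipate is the case analysis around short cycles $C_n$ with $n<2k$: one has to be careful that the spokes are attached to \emph{distinct} vertices and that detouring through $v_0$ genuinely yields a cycle (not merely a closed walk) of the claimed length, and that the lengths obtained actually fill up to $2k$. This is a finite bookkeeping argument, but it is where the hypothesis $t\geq k$ (as opposed to merely $t\geq 2$) is used, and stating it cleanly requires choosing the spoke-endpoints to be spread out along $C_n$ in a way that maximizes the detour length.
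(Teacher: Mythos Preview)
Your long-cycle argument for $W_n^{t,0}$ is where the proof breaks. The graph $W_n^{t,0}$ has exactly $n+1$ vertices, so it cannot contain a cycle of length greater than $n+1$; your claimed cycles ``of all lengths from $n+2$ up to roughly $n+2t$'' do not exist. (Concretely, detouring through $v_0$ between spoke-endpoints at cycle-distance $d$ gives a cycle of length $(n-d)+2\le n+1$, not $n+2$.) Hence for $k\le n\le 2k-2$---which is allowed by the hypotheses, e.g.\ $k=t=n=3$ with $W_n^{t,0}=W_3=K_4$---there is no cycle of length $\ge 2k$ at all, and Corollary~\ref{cor1.5} is inapplicable. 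The case analysis you flagged as ``finite bookkeeping'' is in fact impossible to complete along these lines.

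The paper's route avoids this entirely: instead of reducing only the spokes, reduce the cycle too. Observe that $W_n^t$ (or already your $W_n^{t,0}$) is a subdivision of the wheel $W_t$: subdividing the rim $C_t$ into $C_n$ and each spoke into a path gives exactly $W_n^t$. Since $k\le t$, Theorem~\ref{thm1.6} yields $cc_k(W_t)=k$, and then Theorem~\ref{thm2.2} gives $cc_k(W_n^t)\le k$ in one line, with no case split on $n$. Your reduction to $W_n^{t,0}$ was a correct first move; you just stopped one subdivision step too early.
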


\begin{proof}
 In fact, $W_n^t$ is a subdivision of $W_k$ for $3\leq k \leq t \leq n$. It follows that $cc_{k}(W_n^t)\leq cc_{k}(W_k)=k$ by Theorem \ref{thm1.6} and Theorem \ref{thm2.2}. The lower bound $cc_{k}(W_n^t)\geq k$ is trivial. Thus, $cc_{k}(W_n^t)=k$.
\end{proof}

Notice that a subdivision of $K_3$ is exactly $C_n$, the corresponding result is showed by Theorem \ref{thm1.1}. The following is the result of $k$-color connection number of a subdivision of $K_n$ for $n\geq4$, we omit the obvious proof.

\begin{cor}\label{cor2.3}
Let $G$ be a subdivision of $K_n$ of order $n\geq4$ with $n\geq k+1$, then $cc_{k}(G)=k$.
\end{cor}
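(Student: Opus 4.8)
The plan is to show both bounds $cc_k(G) \le k$ and $cc_k(G) \ge k$ for any subdivision $G$ of $K_n$ with $n \ge 4$ and $n \ge k+1$. The lower bound is immediate: any connected graph on which the parameter is defined satisfies $cc_k(G) \ge k$, as noted in the introduction. So the content is entirely in the upper bound $cc_k(G) \le k$.

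For the upper bound I would invoke Theorem \ref{thm2.2}: since $G$ is a subdivision of $K_n$, we have $cc_k(G) \le cc_k(K_n)$. Now I need $cc_k(K_n) \le k$. When $k \ge 3$ this is exactly Corollary \ref{cor1.7}, which gives $cc_k(K_n) = k$ for all $n \ge k+1$. For the remaining case $k = 2$, one has $cc_2(K_n) = 2$ for $n \ge 3$: color one Hamiltonian cycle's edges with two colors so that not all edges are monochromatic (possible since $n \ge 3$), or more simply observe $K_n$ for $n \ge 3$ contains a cycle of length at least $4$ whenever $n \ge 4$, so Corollary \ref{cor1.5} applies and gives $cc_2(K_n) = 2$; and for $n = 3 = k+1$ with $k = 2$ a direct check on the triangle gives $cc_2(K_3) = 3$, but wait — here $n \ge 4$ is assumed, so $K_3$ never arises. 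Hence in all cases under the hypothesis $n \ge 4$, $n \ge k+1$, we get $cc_k(G) \le cc_k(K_n) = k$.

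Combining, $cc_k(G) = k$. The only mild subtlety — the reason the proof is "obvious" and omitted — is simply chaining Theorem \ref{thm2.2} with the known value of $cc_k(K_n)$; there is no real obstacle. One should just be slightly careful that the hypotheses of Corollary \ref{cor1.7} (namely $k \ge 3$) are met, and handle $k = 2$ separately via Corollary \ref{cor1.5} (a subdivision of $K_4$, indeed of $K_n$ for $n\ge 4$, contains a long cycle, so one could even bypass Theorem \ref{thm2.2} entirely in that case). The note in the paper that a subdivision of $K_3$ is a cycle $C_n$, covered by Theorem \ref{thm1.1}, explains why the $n \ge 4$ restriction is placed rather than being a genuine loss.

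Thus the one-line argument I would write is: by Theorem \ref{thm2.2} and Corollary \ref{cor1.7} (together with the trivial case analysis for small $k$), $k \le cc_k(G) \le cc_k(K_n) = k$, so $cc_k(G) = k$.
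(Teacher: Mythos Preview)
Your proposal is correct and matches the intended argument: chain Theorem~\ref{thm2.2} with Corollary~\ref{cor1.7} to get $k \le cc_k(G) \le cc_k(K_n) = k$, handling the $k=2$ case via Corollary~\ref{cor1.5} since $K_n$ with $n\ge 4$ contains a cycle of length at least $4$. This is exactly the ``obvious'' proof the paper omits.
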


\section{ $2$-connected graphs }

Lemma \ref{lem1.3} implies that determining k-color connection number of a graph is ascribed to that of a 2-connected graph. Thus, we are focus on 2-connected graphs in this section.

A graph $G$ of order $n$ is called pancyclic if it contains cycles of all lengths $r$ for $3\leq r \leq n$. The circumference of a graph $G$, denoted by $C(G)$, is the length of a longest cycle of $G$.

\begin{thm}\label{thm3.1} \cite{D}
Let $G$ is a $2$-connected graph of order $n$ and minimum $\delta$. Then $C(G)\geq \min\{n, 2\delta\}$.
\end{thm}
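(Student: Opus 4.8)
The statement is the classical theorem of Dirac on the circumference of a $2$-connected graph, and I would prove it by the standard extremal argument by contradiction. Let $C$ be a longest cycle of $G$ and suppose, for a contradiction, that $|C|<\min\{n,2\delta\}$; then $|C|<n$ and $|C|<2\delta$ hold at once. If $V(C)=V(G)$ the circumference is $n\geq\min\{n,2\delta\}$ and we are done, so from $|C|<n$ we may fix a component $H$ of $G-V(C)$.

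The first step is to combine the maximality of $C$ with $2$-connectivity. Applying Menger's fan lemma to a vertex of $H$ and the vertex set $V(C)$ (which has at least three vertices), $G$ contains a path with both endpoints on $C$ and all internal vertices in $H$; more generally, let $S\subseteq V(C)$ be the set of vertices of $C$ having a neighbour in $H$, so $|S|\geq 2$, and for any two vertices of $S$ there is a path through $H$ joining them. If $v,v'\in S$ are consecutive on $C$ and $Q$ is the arc between them containing no other vertex of $S$, then replacing the complementary arc of $C$ by such a path from $v$ to $v'$ gives a cycle, so by maximality of $C$ the arc $Q$ is at least as long as that path, hence $|Q|\geq 2$. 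Summing over the $|S|$ arcs into which $S$ splits $C$ yields $|C|\geq 2|S|$, so $|S|\leq|C|/2<\delta$.

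The second step uses the degree hypothesis. A vertex $h\in H$ satisfies $\deg_G(h)\geq\delta$, while every neighbour of $h$ on $C$ lies in $S$, so $h$ has fewer than $\delta$ neighbours on $C$ and therefore at least $\delta-|S|\geq 1$ neighbours inside $H$; thus $G[H]$ has minimum degree at least $\delta-|S|$. The aim now is to route, through $H$, a path between two well-chosen vertices of $S$ whose length exceeds that of the shorter of the two arcs of $C$ between them, and then close it up with the longer arc, producing a cycle strictly longer than $C$ --- a contradiction. A convenient auxiliary observation for the bookkeeping is that if some vertex of $H$ is adjacent to $t$ vertices of $C$, then going ``the long way round'' $C$ for each of the $t$ gaps shows $|C|\geq 2t$; one then balances the degree budget $\deg_G(h)\geq\delta$ of a single, carefully chosen vertex $h\in H$ against $|C|$ and the minimum degree of $G[H]$.

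The main obstacle is exactly this last quantitative step: one has to guarantee that the path routed through $H$ can be made long enough to beat the relevant arc of $C$, and it is here that $2$-connectivity (rather than mere connectivity) is indispensable and where the factor $2$ in $2\delta$ is actually spent; the sub-case $|S|=2$, where $\{v,v'\}$ is a $2$-cut and $G[V(H)\cup\{v,v'\}]$ is itself $2$-connected with almost all degrees at least $\delta$, is the heart of the matter. An alternative, and in my view cleaner, route is to run the argument on a longest path $P=v_0v_1\cdots v_\ell$ rather than on a longest cycle: since $P$ is longest, $N(v_0),N(v_\ell)\subseteq V(P)$, and the inequality $\deg_G(v_0)+\deg_G(v_\ell)\geq 2\delta$ forces two prescribed subsets of $\{0,1,\dots,\ell-1\}$ to meet, which yields either a cycle through all of $V(P)$ --- and then a vertex outside such a cycle would extend $P$, so it must be Hamiltonian --- or directly a cycle of length at least $2\delta$.
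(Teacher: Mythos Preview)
The paper contains no proof of this statement: Theorem~\ref{thm3.1} is quoted verbatim from Dirac~\cite{D} and used only as a black box in the proof of Theorem~\ref{thm3.4}. There is therefore nothing in the paper to compare your proposal against.

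On its own merits your sketch is not a proof. In the longest-cycle approach the set-up (component $H$ of $G-V(C)$, attachment set $S\subseteq V(C)$, and the bound $|C|\ge 2|S|$) is correct, but you yourself flag the decisive step---producing an $H$-path long enough to beat the short $C$-arc between two chosen attachment vertices---as ``the main obstacle'' and then do not carry it out; that step \emph{is} the theorem. The alternative longest-path route has the same defect: the pigeonhole you invoke, that $\deg(v_0)+\deg(v_\ell)\ge 2\delta$ forces the two index sets in $\{0,\dots,\ell-1\}$ to meet, only works when $2\delta>\ell$, yielding a cycle through all of $V(P)$; when $\ell\ge 2\delta$ the sets need not meet, and you still owe a cycle of length at least $2\delta$. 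That residual case is exactly where $2$-connectivity must be invoked a second time (e.g.\ via a fan from $v_\ell$, or from a vertex outside the cycle $v_0v_1\cdots v_iv_0$ determined by the furthest neighbour $v_i$ of $v_0$), and nothing in your write-up covers it.
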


Dirac \cite{D} has also shown the following two results for 2-connected graphs.

\begin{thm}\label{thm3.2}
Let $G$ is a graph with $n$ vertices. If $\delta\geq\frac{n}{2}$, then $G$ has a Hamiltonian cycle.
\end{thm}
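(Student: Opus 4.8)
This is Dirac's classical sufficient condition for Hamiltonicity, which the paper takes from \cite{D}; the plan for a proof is the standard longest-path argument, with a one-line shortcut available via Theorem \ref{thm3.1}. We may assume $n\ge 3$. First I would record that $\delta\ge n/2$ forces $G$ to be $2$-connected: were $G$ disconnected, a smallest component would have at most $\lfloor n/2\rfloor$ vertices, giving a vertex of degree at most $\lfloor n/2\rfloor-1<n/2$; and were $v$ a cut vertex, a smallest component of $G-v$ would have at most $\lfloor(n-1)/2\rfloor$ vertices, again giving a vertex of degree less than $n/2$. Both cases contradict the hypothesis.

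With $2$-connectivity in hand, the fastest finish is to invoke Theorem \ref{thm3.1}: since $2\delta\ge n$ we get $C(G)\ge\min\{n,2\delta\}=n$, hence $C(G)=n$ and $G$ is Hamiltonian. For a self-contained argument I would instead take a longest path $P=v_1v_2\cdots v_k$ in $G$; by maximality every neighbour of $v_1$ and of $v_k$ lies on $P$. Setting $S=\{\,i:1\le i\le k-1,\ v_1v_{i+1}\in E(G)\,\}$ and $T=\{\,i:1\le i\le k-1,\ v_iv_k\in E(G)\,\}$, I would note $|S|=\deg(v_1)\ge n/2$ and $|T|=\deg(v_k)\ge n/2$, so $|S|+|T|\ge n>k-1$ and therefore $S\cap T\neq\emptyset$. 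Picking $i\in S\cap T$, the edges $v_1v_{i+1}$ and $v_iv_k$ close $P$ into the cycle $C=v_1v_2\cdots v_iv_kv_{k-1}\cdots v_{i+1}v_1$ spanning $\{v_1,\dots,v_k\}$. Finally, if $k<n$ then by connectivity some $u\notin V(C)$ is adjacent to a vertex $v_j$ of $C$; deleting a $C$-edge at $v_j$ and adding the edge $uv_j$ produces a path on $k+1$ vertices, contradicting the maximality of $P$. Hence $k=n$ and $C$ is the desired Hamiltonian cycle.

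The only step carrying any real content is the counting inequality $|S|+|T|\ge n>k-1$ that forces $S\cap T\neq\emptyset$, and it hinges on $k\le n$; the $2$-connectivity reduction and the path-extension step are routine. If one is content to use Theorem \ref{thm3.1}, even that disappears, the argument collapsing to the observation that the minimum in that circumference bound equals $n$ under the hypothesis.
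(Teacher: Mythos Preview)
Your proof is correct; the longest-path/crossing-chord argument is exactly the standard one, and the pigeonhole step $|S|+|T|\ge n>k-1\ge|S\cup T|$ is applied correctly. The paper, however, gives no proof of this statement at all: Theorem~\ref{thm3.2} is simply quoted from Dirac~\cite{D} and used as a black box inside the proof of Theorem~\ref{thm3.4}. So there is nothing in the paper to compare your argument against; you have supplied a self-contained proof where the paper only cites the literature.

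One remark on your shortcut via Theorem~\ref{thm3.1}: that result is also taken from the same paper~\cite{D}, and in Dirac's original work the Hamiltonicity statement and the circumference bound are proved together by essentially the same longest-path method you give. So while invoking Theorem~\ref{thm3.1} is formally valid within this paper's logical structure, it is not really an independent route---your self-contained argument is the substantive one.
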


\begin{thm}\label{thm3.3}
Suppose $G=G(n,m)$ is Hamiltonian and $m\geq\frac{n^2}{4}$. Then either $G$ is pancyclic or else $G=K_{\frac{n}{2},\frac{n}{2}}$. In particular, if $G=G(n,m)$ is Hamiltonian and $m>\frac{n^2}{4}$ then $G$ is pancyclic.
\end{thm}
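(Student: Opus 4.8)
The plan is to establish this via the classical chord-counting argument of Bondy, organized as a downward sweep over cycle lengths. First I would fix a Hamilton cycle $C = v_1 v_2 \cdots v_n v_1$ of $G$ (one exists by hypothesis) and call the $m-n$ edges of $G$ outside $C$ the \emph{chords}. The goal is to show that $G$ contains a cycle of length $\ell$ for every $\ell$ with $3 \le \ell \le n$: it already contains one of length $n$, and I would argue that whenever $G$ contains a cycle of length $\ell$ with $4 \le \ell \le n$, the density hypothesis $m \ge n^2/4$ forces a cycle of length $\ell-1$ as well, unless a single structural obstruction occurs. That obstruction will turn out to be that $G$ is bipartite with the two sides being the odd- and even-indexed vertices of $C$; since $|E(K_{n/2,n/2})| = n^2/4$, the inequality $m \ge n^2/4$ then forces $n$ even and $G = K_{n/2,n/2}$, which is exactly the exceptional case, and the ``in particular'' clause follows at once because $m > n^2/4$ rules this alternative out.

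For the mechanics, I would use that a chord $v_i v_j$ together with either arc of $C$ between its endpoints closes into a cycle, the two resulting lengths summing to $n+2$; more importantly, two chords with a common endpoint and with consecutive other endpoints along $C$ (e.g.\ $v_i v_j$ and $v_i v_{j+1}$) produce cycles of two consecutive lengths, and a short ``ladder'' of such chords produces a whole run of consecutive lengths. Hence if some length $\ell \in \{3, \ldots, n-1\}$ is absent while $\ell+1, \ldots, n$ are all present, then at the site of an $(\ell+1)$-cycle, and then---rotating the picture along $C$---at every vertex, the incident chords must avoid a prescribed configuration of pairs of edges (short chords, and crossing or nested pairs of chords). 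I would then translate ``every vertex avoids this configuration'' into an upper bound on $m$ and check that this bound is strictly below $n^2/4$ unless the forbidden pattern degenerates to ``no edge joins two vertices of the same parity class of $C$'', which is precisely the bipartite case above.

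The step I expect to be the real obstacle is this last counting: one must verify that a missing length forbids not merely a handful of isolated chords but an entire family of them, and then carry out the edge count tightly enough that $n^2/4$ is the exact threshold, with equality only for $K_{n/2,n/2}$. The small lengths $\ell = 3, 4$ and the parity-sensitive range $\ell \approx n/2$ are the delicate parts, and the cleanest way to manage them is strong induction on $n$: from a cycle of length $n-1$ (or $n-2$) pass to the subgraph it spans, confirm that it is still Hamiltonian and still dense enough to apply the inductive hypothesis, lift back to $G$ the shorter cycles so obtained, and dispose of the base cases (small $n$, split according to the parity of $n$) by direct inspection.
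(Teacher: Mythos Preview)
The paper does not prove this theorem at all: it is quoted as a known result from the literature (the paper attributes it, together with Theorems~\ref{thm3.1} and~\ref{thm3.2}, to Dirac~\cite{D}, although the pancyclicity statement is in fact due to Bondy). It is used only as a black box inside the proof of Theorem~\ref{thm3.4}. So there is no ``paper's own proof'' to compare your proposal against.

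For what it is worth, your outline is the standard Bondy argument and is broadly on track. The cleanest execution is not quite the downward sweep you describe but rather a direct induction on $n$: from the Hamilton cycle one shows (by the chord-counting you sketch) that either $G$ contains a cycle $C'$ of length $n-1$, or all edges of $G$ respect the bipartition given by the parity classes along $C$, whence $m\ge n^2/4$ forces $G=K_{n/2,n/2}$. In the former case one checks that the subgraph induced on $V(C')$ has at least $(n-1)^2/4$ edges and is Hamiltonian, applies the induction hypothesis, and handles the possibility that this subgraph is $K_{(n-1)/2,(n-1)/2}$ by a separate short argument. Your plan to pass to the subgraph spanned by a shorter cycle is exactly this, but the key counting step---that a missing $(n-1)$-cycle bounds the number of chords at each vertex in a way that sums to fewer than $n^2/4$ edges unless the bipartite obstruction holds---is the crux, and your proposal treats it only at the level of intention rather than execution.
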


First, We give minimum degree condition for $k$-color connection number $k$ for 2-connected graphs.

\begin{thm}\label{thm3.4}
If $G$ is a $2$-connected graph of order $n\geq4$ with $\delta\geq k$, then $cc_{k}(G)=k$.
\end{thm}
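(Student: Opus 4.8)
The lower bound $cc_k(G)\ge k$ is immediate, so the plan concerns only the upper bound $cc_k(G)\le k$, and I would split the argument according to whether $n\ge 2k$ or $n\le 2k-1$.

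The case $n\ge 2k$ I would dispatch at once. Since $\delta\ge k$ we have $2\delta\ge 2k$, so Theorem~\ref{thm3.1} gives $C(G)\ge\min\{n,2\delta\}\ge\min\{2k,2k\}=2k$; thus $G$ contains a cycle of length at least $2k$ and Corollary~\ref{cor1.5} yields $cc_k(G)=k$. Note that this already covers $k=2$ completely, since then $n\ge 4=2k$.

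The substantive case is $k+1\le n\le 2k-1$: the inequality $n\ge k+1$ is forced by $\delta\ge k$, and together with $n\ge 4$ it forces $k\ge 3$, so $W_k$ is an honest wheel. Here $\delta\ge k\ge\frac{n+1}{2}>\frac n2$, so Theorem~\ref{thm3.2} makes $G$ Hamiltonian, and $m\ge\frac{n\delta}{2}\ge\frac{nk}{2}\ge\frac{n(n+1)}{4}>\frac{n^2}{4}$, so Theorem~\ref{thm3.3} makes $G$ pancyclic. In particular (using $n\ge 4$) $G$ contains a cycle $C$ of length $n-1$; let $v$ be the one vertex of $G$ off $C$. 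Since $\deg_G(v)\ge\delta\ge k$ and every neighbour of $v$ lies on $C$ (as $C$ spans $V(G)\setminus\{v\}$), I would pick $k$ of these neighbours $w_1,\dots,w_k$, listed in the cyclic order in which they occur on $C$; this is possible because $|V(C)|=n-1\ge k$. Then $H:=C\cup\{vw_1,\dots,vw_k\}\subseteq G$ is, by construction, a subdivision of $W_k$: its centre is $v$, its spokes are the (unsubdivided) edges $vw_i$, and its rim is the cycle $C$ regarded as the rim cycle $w_1w_2\cdots w_kw_1$ of $W_k$ with each rim edge $w_iw_{i+1}$ subdivided along the $C$-arc joining $w_i$ to $w_{i+1}$. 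Now Theorem~\ref{thm2.2} gives $cc_k(H)\le cc_k(W_k)$, Theorem~\ref{thm1.6} gives $cc_k(W_k)=k$, and since $G$ is $2$-connected and contains $H$ as a subgraph, Theorem~\ref{thm1.4} yields $cc_k(G)\le cc_k(H)\le k$. Combined with the trivial lower bound this gives $cc_k(G)=k$.

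The only real obstacle is precisely the range $n\le 2k-1$, where Theorem~\ref{thm3.1} no longer forces a cycle of length $2k$ and Corollary~\ref{cor1.5} is unavailable; what rescues the argument is that this density regime is rigid enough that deleting one vertex still leaves a graph with a spanning cycle, and re-attaching that vertex via $k$ spokes manufactures a subdivided $W_k$, whose $k$-color connection number is already $k$. Beyond this, only routine points need care: that $G\ne K_{n/2,n/2}$ so that Theorem~\ref{thm3.3} really applies (automatic, since $m>\frac{n^2}{4}$ strictly, which also handles odd $n$), that $k\le n-1$ so that $k$ neighbours of $v$ can indeed be selected on $C$, and that $H$ meets the formal definition of a subdivision of $W_k$ (unsubdivided spokes and length-one rim arcs are permitted, and the $C$-arcs between consecutive $w_i$ are internally disjoint from one another and from the spokes). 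If one prefers to bypass Theorem~\ref{thm3.3}, one can instead note directly that for any vertex $v$ the graph $G-v$ has $n-1$ vertices and minimum degree $\ge\delta-1\ge k-1\ge\frac{n-1}{2}$, hence is Hamiltonian by Theorem~\ref{thm3.2}, which supplies the spanning cycle $C$ just as well; quoting pancyclicity is merely the shorter route.
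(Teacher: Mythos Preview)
Your proof is correct and follows essentially the same route as the paper: invoke Dirac's circumference bound together with Corollary~\ref{cor1.5} when a long enough cycle is guaranteed, and otherwise use Theorems~\ref{thm3.2} and~\ref{thm3.3} to obtain pancyclicity, extract an $(n-1)$-cycle plus $k$ spokes from the missing vertex to form a subdivided $W_k$, and finish via Theorems~\ref{thm1.4}, \ref{thm1.6}, and~\ref{thm2.2}. Your case split $n\ge 2k$ versus $n\le 2k-1$ is slightly cleaner than the paper's $n\ge 2\delta$ versus $n<2\delta$, and where the paper quotes Corollary~\ref{cor2.4} on the wheel-like graph $W_{n-1}^k$ you simply unpack that corollary's proof inline, but the substance is identical.
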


\begin{proof}
Since $G$ is $2$-connected, it follows that $G$ has a cycle of length at least $\min\{n, 2\delta\}$ by Theorem \ref{thm3.1}. Suppose that $n\geq2\delta\geq2k$, we have that $G$ has a cycle of length at least $2k$. Applying Corollary \ref{cor1.5} to $G$, $cc_{k}(G)=k$. Next, we just need to consider the case that $n<2\delta$ in the following. Note that $\delta>\frac{n}{2}$, which implies $G$ is Hamiltonian and $m\geq\frac{n\delta}{2}>\frac{n^2}{4}$. It follows from Theorem \ref{thm3.3} that $G$ is pancyclic.  Thus, $G$ has a cycle $C_0$ of length $n-1$. Note that $\delta\geq k$, it means $n-1\geq k$. Let $v_0 \notin C_0$. Since $d(v_0)\geq\delta\geq k$, the wheel-like graph $W_{n-1}^k$ can be obtained from $C_0$ by joining $v_0$ to its $k$ neighbours on $C_0$. It follows from Corollary \ref{cor2.4} that $cc_{k}(W_{n-1}^k)=k$.  Thus, $cc_{k}(G)\leq cc_{k}(W_{n-1}^k)=k$ by Theorem \ref{thm1.4}.
\end{proof}

Note that the minimum degree condition in Theorem \ref{thm3.4} is best possible. Consider the complete graph $K_k$ with $\delta=k-1$,  but $cc_{k}(K_k)$ does not exist. Other examples are that if $k=3$, let $G \in \{ C_5, C_{5}^{c2} \}$, then $G$ has the property that $\delta=k-1$ and $cc_{k}(G)\geq k+1$.

In fact, if $G$ is $k$-connected, then $G$ is 2-connected and $\delta\geq k$, which implies the following straightforward result.

\begin{cor}\label{cor3.5}
If $G$ is a $k$-connected graph of order $n\geq4$, then $cc_{k}(G)=k$.
\end{cor}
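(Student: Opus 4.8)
The plan is to derive this immediately from Theorem \ref{thm3.4}, so the argument amounts to checking that a $k$-connected graph of order $n \geq 4$ satisfies the two hypotheses of that theorem, namely that it is $2$-connected and has minimum degree $\delta \geq k$. First I would recall that, by definition, a $k$-connected graph on at least $k+1$ vertices cannot be disconnected by removing fewer than $k$ vertices; in particular, since we are assuming $k \geq 2$, such a graph is $2$-connected. Second, in a $k$-connected graph every vertex must have degree at least $k$: if some vertex $v$ had $d(v) < k$, then deleting its $N(v)$ (at most $k-1$ vertices) would either disconnect $v$ from the rest or leave the trivial graph, contradicting $k$-connectivity when $n \geq k+1$. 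Hence $\delta \geq k$.

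The one point that requires a small amount of care is the order constraint. Theorem \ref{thm3.4} requires $n \geq 4$, which is given; but I should also make sure the graph is not too small relative to $k$. Since $\delta \geq k$ forces $n \geq k+1$ (each vertex has $k$ distinct neighbours), and combined with $n \geq 4$ this is consistent with the setting of Theorem \ref{thm3.4}. The lower bound $cc_k(G) \geq k$ is the trivial general bound noted in the introduction (valid whenever $cc_k(G)$ exists, and here it does since a $2$-connected graph is bridgeless). Therefore $cc_k(G) = k$.

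The main obstacle, such as it is, is purely bookkeeping: one must be slightly attentive to the degenerate case where $n$ is small — for instance ruling out $G = K_k$ (which is only $(k-1)$-connected, hence not a counterexample) — but the hypothesis $n \geq 4$ together with the implication $\delta \geq k \Rightarrow n \geq k+1$ handles everything, and no new idea beyond Theorem \ref{thm3.4} is needed.
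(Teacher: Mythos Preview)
Your proposal is correct and matches the paper's approach exactly: the paper simply notes that a $k$-connected graph is $2$-connected with $\delta \geq k$ and invokes Theorem~\ref{thm3.4}. Your extra bookkeeping about the order constraint and the case $G=K_k$ is harmless but more than the paper bothers with.
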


The authors in \cite{BH} proved the following result.

\begin{thm}\label{thm3.9}
For every nonnegative integer $k$, almost all graphs are $k$-connected.
\end{thm}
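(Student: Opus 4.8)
The plan is to read ``almost all graphs'' in the standard probabilistic sense: a labeled graph on $n$ vertices is chosen uniformly from the $2^{\binom{n}{2}}$ possibilities, equivalently each of the $\binom{n}{2}$ potential edges is present independently with probability $1/2$, and the goal is to show that the probability of being $k$-connected tends to $1$ as $n\to\infty$, with $k$ fixed. The case $k=0$ is trivial since every graph is $0$-connected, so I would assume $k\ge1$ and estimate the probability of the complementary event by the first moment (union bound) method.

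First I would reduce non-$k$-connectivity to the existence of a small vertex separator. For $n>k$, a graph $G$ fails to be $k$-connected exactly when there is a set $S\subseteq V$ with $|S|\le k-1$ such that $G-S$ is disconnected; in that case $V\setminus S$ splits into two nonempty parts $A,B$ with no edge of $G$ joining $A$ to $B$. Hence the event ``$G$ is not $k$-connected'' is contained in the union, over all such triples $(S,A,B)$, of the events that no edge runs between $A$ and $B$. Writing $|S|=s\le k-1$ and $|A|=a$ (so $|B|=n-s-a$), and using the symmetry between $A$ and $B$ to assume $1\le a\le\lfloor(n-s)/2\rfloor$, a fixed such pair has $a(n-s-a)$ potential crossing edges, all absent with probability $2^{-a(n-s-a)}$. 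A union bound then gives
\begin{equation*}
\Pr[G\text{ not }k\text{-connected}]\le\sum_{s=0}^{k-1}\sum_{a=1}^{\lfloor(n-s)/2\rfloor}\binom{n}{s}\binom{n-s}{a}\,2^{-a(n-s-a)}.
\end{equation*}

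The core step is to show that this bound is $o(1)$. Since there are only $k$ values of $s$, it suffices to bound the inner sum for each fixed $s\le k-1$. Using $\binom{n}{s}\le n^s$ and $\binom{n-s}{a}\le n^a=2^{a\log_2 n}$, the general term is at most $2^{\,s\log_2 n+a(\log_2 n-(n-s-a))}$. For $1\le a\le(n-s)/2$ one has $n-s-a\ge(n-s)/2$, so the exponent is at most $s\log_2 n+a\big(\log_2 n-(n-s)/2\big)$, whose bracket is negative for large $n$; the exponent is therefore decreasing in $a$ and maximized at $a=1$. The $a=1$ term is of order $n^{s+1}2^{-(n-s-1)}\to0$, and successive terms decay geometrically with ratio $o(1)$, so the inner sum tends to $0$; summing over the finitely many $s$ preserves the $o(1)$ bound. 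By the first moment method the failure probability tends to $0$, so almost all graphs are $k$-connected.

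The main obstacle is controlling the inner sum uniformly over all separator sizes $a$, especially the ``balanced'' range $a\approx n/2$ where $\binom{n-s}{a}$ is as large as $2^{n}$. This is handled by the observation that the number of forbidden crossing edges $a(n-s-a)$ grows quadratically there and overwhelms the entropy contribution $a\log_2 n$ of the binomial coefficients, so the dominant term is in fact the smallest cut $a=1$ (a vertex isolated after deleting $S$), giving the familiar $n^{k}2^{-n}$-type estimate. Everything else is routine bookkeeping of this sum.
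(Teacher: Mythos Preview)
Your argument is correct and is the standard first-moment proof that $G(n,1/2)$ is almost surely $k$-connected for any fixed $k$: the union bound over separator triples $(S,A,B)$, the reduction to the inner sum in $a$, and the observation that the $a=1$ term dominates and is of order $n^{k}2^{-n}$ are all sound.

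There is, however, nothing to compare it to in the paper. Theorem~\ref{thm3.9} is not proved here; it is quoted from Blass and Harary~\cite{BH} (``The authors in \cite{BH} proved the following result'') and used only as a black box to deduce Theorem~\ref{thm3.10}. So your proposal is not an alternative to the paper's proof but rather a self-contained substitute for a citation. If anything, your write-up is more than the paper offers, and it would serve perfectly well as an appendix-style justification of the cited result.
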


From Corollary \ref{cor3.5} and Theorem \ref{thm3.9}, one can easily obtain the following result.

\begin{thm}\label{thm3.10}
Almost all graphs have the $k$-color connection number $k$.
\end{thm}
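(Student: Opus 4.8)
The plan is to deduce Theorem \ref{thm3.10} directly from the two results quoted immediately before it, namely Corollary \ref{cor3.5} and Theorem \ref{thm3.9}. Recall that the phrase ``almost all graphs have property $P$'' means that, among the $2^{\binom{n}{2}}$ labelled graphs on vertex set $\{1,\dots,n\}$, the proportion having property $P$ tends to $1$ as $n\to\infty$ (equivalently, $P$ holds asymptotically almost surely in the Erd\H{o}s--R\'enyi model $G(n,1/2)$). The key observation is that $k$-connectedness is, for all large $n$, a sufficient condition for $cc_k(G)=k$: by Corollary \ref{cor3.5}, every $k$-connected graph of order $n\geq 4$ satisfies $cc_k(G)=k$.

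The first step is to fix the integer $k\geq 2$ and apply Theorem \ref{thm3.9} with this value of $k$: almost all graphs are $k$-connected, i.e.\ the set $\mathcal{K}_n$ of $k$-connected graphs on $n$ vertices satisfies $|\mathcal{K}_n|/2^{\binom{n}{2}}\to 1$. The second step is to note that for $n\geq 4$ we have the containment $\mathcal{K}_n\subseteq \{G \text{ on } n \text{ vertices}: cc_k(G)=k\}=:\mathcal{C}_n$, which is exactly Corollary \ref{cor3.5}. The third step is the elementary monotonicity of density under inclusion: since $\mathcal{K}_n\subseteq\mathcal{C}_n$ for all $n\geq 4$, we get
\[
1\;\geq\;\frac{|\mathcal{C}_n|}{2^{\binom{n}{2}}}\;\geq\;\frac{|\mathcal{K}_n|}{2^{\binom{n}{2}}}\;\longrightarrow\;1,
\]
so by the squeeze theorem $|\mathcal{C}_n|/2^{\binom{n}{2}}\to 1$, which is precisely the assertion that almost all graphs have $k$-color connection number $k$. (The finitely many values $n<4$ are irrelevant to the limit.)

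There is essentially no obstacle here: the statement is a one-line corollary of results already in hand, and the only thing to be careful about is making the meaning of ``almost all'' explicit and checking that the inclusion from Corollary \ref{cor3.5} holds for all sufficiently large $n$ (it does, for every $n\geq 4$). If one wanted to be slightly more self-contained one could also remark that the lower bound $cc_k(G)\geq k$ holds for every graph for which $cc_k$ is defined, so $\mathcal{C}_n$ could equivalently be described as the graphs with $cc_k(G)\le k$; but this is not needed for the argument. The whole proof therefore consists of: invoke Theorem \ref{thm3.9}, invoke Corollary \ref{cor3.5}, and combine by inclusion/squeeze.
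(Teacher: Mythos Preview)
Your proposal is correct and follows exactly the route the paper takes: the paper simply states that Theorem~\ref{thm3.10} follows from Corollary~\ref{cor3.5} and Theorem~\ref{thm3.9}, and you have spelled out that deduction in full detail. There is nothing to add or correct.
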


Next, we provide a sufficient condition to guarantee $cc_{k}(G)\leq 2k-1$ in terms of the number of edges $m$. The following lemma is an important ingredient in our proofs.

\begin{lem}\label{lem3.6}\cite{B}
If $G=G(n,m)$, $m\geq\frac{(c-1)(n-1)}{2}+1$, $3\leq c \leq n$, then the circumference $C(G)$ of $G$ is at least $c$.
\end{lem}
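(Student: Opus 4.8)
Since this is a classical bound on the circumference of a graph in terms of its number of edges, the plan is to prove it by induction on $n$, keeping $c$ fixed and using the edge count $m$ as the quantity that drives the induction. The base case is $n=c$, where the conclusion $C(G)\geq c$ just says that $G$ is Hamiltonian. Here the hypothesis reads $m\geq\frac{(c-1)^2}{2}+1$, which forces the complement $\overline{G}$ to have at most $\frac{n-3}{2}$ edges; consequently any two vertices of $G$ have degree-sum at least $2(n-1)-2|E(\overline{G})|\geq n+1>n$, so $G$ is Hamiltonian by Ore's theorem. (Equivalently, one may simply invoke the classical fact that a graph on $n\geq 3$ vertices with more than $\binom{n-1}{2}+1$ edges is Hamiltonian.)

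For the inductive step assume $n>c$, and first reduce to the case that $G$ is $2$-connected. If $G$ is not $2$-connected then, since $n\geq c\geq 3$, we can partition $V(G)=V_1\cup V_2$ with $|V_1\cap V_2|\leq 1$, with both $V_1\setminus V_2$ and $V_2\setminus V_1$ nonempty, and with no edge joining $V_1\setminus V_2$ to $V_2\setminus V_1$ (this is exactly what a cut vertex, or a disconnection, provides). Writing $G_i=G[V_i]$, $n_i=|V_i|$ and $m_i=|E(G_i)|$, one has $m=m_1+m_2$ and $(n_1-1)+(n_2-1)\leq n-1$; so if both $m_i\leq\frac{(c-1)(n_i-1)}{2}$ held, summing would give $m\leq\frac{(c-1)(n-1)}{2}$, a contradiction, and hence some index $i$ satisfies $m_i\geq\frac{(c-1)(n_i-1)}{2}+1$. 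The trivial bound $m_i\leq\binom{n_i}{2}$ then forces $c\leq n_i$, while $n_i<n$ because the other side is nonempty; applying the induction hypothesis to $G_i$ yields a cycle of length at least $c$ inside $G$.

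It remains to treat the $2$-connected case, and here the argument splits according to whether $2\delta\geq c$, where $\delta=\delta(G)$. Since $\delta$ is an integer, the only alternative is $\delta\leq\frac{c-1}{2}$. If $2\delta\geq c$, then Dirac's theorem (Theorem~\ref{thm3.1}) immediately gives $C(G)\geq\min\{n,2\delta\}\geq\min\{n,c\}=c$, using $n\geq c$. If instead $\delta\leq\frac{c-1}{2}$, delete a vertex $v$ of degree $\delta$: this removes at most $\frac{c-1}{2}$ edges, so $G-v$ has $n-1\geq c$ vertices and at least $\frac{(c-1)(n-1)}{2}+1-\frac{c-1}{2}=\frac{(c-1)(n-2)}{2}+1$ edges, and the induction hypothesis applied to $G-v$ finishes the proof.

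The one genuinely delicate point is the calibration of the degree threshold $\frac{c-1}{2}$: it has to be small enough that deleting a vertex below it keeps the edge inequality valid in $G-v$, and at the same time large enough that the complementary inequality $2\delta\geq c$ feeds Dirac's circumference bound with exactly the right strength; the value $\frac{c-1}{2}$ is precisely this sweet spot. A secondary but routine point is checking that the reduction to $2$-connectedness does not destroy the edge-count hypothesis, which is why the split is phrased directly in terms of $V_1$ and $V_2$ rather than through the full block decomposition.
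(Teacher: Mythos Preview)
The paper does not prove this lemma at all: it is quoted as a classical result from Bollob\'as's \emph{Extremal Graph Theory} and used as a black box in Theorems~\ref{thm3.7} and~\ref{thm3.8}. So there is no ``paper's own proof'' to compare against; what you have supplied is an independent proof, and it is essentially the standard Erd\H{o}s--Gallai argument (induction on $n$, reduce to $2$-connected, then split on whether $2\delta\geq c$ and invoke Dirac's circumference bound in one branch, delete a minimum-degree vertex in the other). The overall strategy is correct and complete.

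One small arithmetic slip: in the non-$2$-connected reduction you pass from ``not $m_i\leq\frac{(c-1)(n_i-1)}{2}$'' to ``$m_i\geq\frac{(c-1)(n_i-1)}{2}+1$''. When $(c-1)(n_i-1)$ is odd this only gives $m_i\geq\frac{(c-1)(n_i-1)+1}{2}$, which is half a unit short of what you need to feed back into the induction hypothesis as you have stated it. The cleanest repair is to run the whole induction for the (marginally stronger) statement ``$m>\frac{(c-1)(n-1)}{2}\Rightarrow C(G)\geq c$'' instead; every step of your argument goes through verbatim with the strict inequality, the base case still meets Ore's threshold $d(u)+d(v)\geq n$, and the lemma as stated follows immediately. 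This is cosmetic rather than a genuine gap.
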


\begin{thm}\label{thm3.7}
If $G$ is a $2$-connected graph with $m\geq(k-1)(n-1)+1$, $n\geq2k-1$, then $cc_{k}(G)\leq 2k-1$.
\end{thm}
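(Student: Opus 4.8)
The plan is to use the edge-count hypothesis to extract a long cycle, and then either conclude directly from Corollary~\ref{cor1.5} or build a chorded cycle of the type handled by Theorem~\ref{thm1.2}. First I would apply Lemma~\ref{lem3.6} with $c=2k-1$: since $m\geq(k-1)(n-1)+1=\frac{(2k-2)(n-1)}{2}+1$ and $3\leq 2k-1\leq n$ (using $n\geq 2k-1$ and $k\geq 2$), the circumference $C(G)$ is at least $2k-1$. If in fact $C(G)\geq 2k$, then $G$ is $2$-connected and contains a cycle of length at least $2k$, so Corollary~\ref{cor1.5} gives $cc_k(G)=k\leq 2k-1$ and we are done. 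Hence the remaining case is $C(G)=2k-1$ exactly; let $C$ be a longest cycle, of length $2k-1$.

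In this case I would first dispose of the subcase $V(C)=V(G)$: then $G$ is Hamiltonian on $n=2k-1$ vertices, so $G$ contains $C_{2k-1}$ as a subgraph and, being $2$-connected, Theorem~\ref{thm1.4} together with $cc_k(C_{2k-1})=2k-1$ (Theorem~\ref{thm1.1}) yields $cc_k(G)\leq 2k-1$. So assume there is a vertex $u\notin V(C)$. Since $G$ is $2$-connected, by Menger's theorem there are two internally disjoint $u$–$C$ paths $P_1,P_2$ meeting $C$ at distinct vertices $x,y$. Walking around $C$, the vertices $x$ and $y$ split $C$ into two arcs of lengths $t$ and $2k-1-t$; let $t\leq k-1$ be the shorter one (note $t\geq 1$, and $t\leq k-1$ because the two arc-lengths sum to the odd number $2k-1$, so they cannot both be $\geq k$). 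The closed walk $x\,P_1\,u\,P_2\,y$ together with the longer arc of $C$ is a cycle $C'$; since $P_1,P_2$ have length at least $1$ and the longer arc has length $2k-1-t\geq k$, the length of $C'$ is at least $2k-1-t+2$. If this length is $\geq 2k$, Corollary~\ref{cor1.5} again finishes the proof; otherwise $P_1,P_2$ are single edges, $C'$ has length $2k-1-t+2=2k+1-t$, and $G$ contains as a subgraph the graph obtained from the $(2k-1)$-cycle $C$ by adding the path $x$–$u$–$y$, i.e. a subdivision of the chorded cycle $C_{2k-1}^{ct}$ (the chord of $C_{2k-1}^{ct}$ being subdivided once by $u$). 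By Theorem~\ref{thm2.2}, $cc_k$ of this subdivision is at most $cc_k(C_{2k-1}^{ct})=2k-t\leq 2k-1$ (using $t\geq 1$ and Theorem~\ref{thm1.2}, valid since $k\geq 3$; the case $k=2$ is covered by the cycle analysis above since then $C(G)\geq 3=2k-1$ forces, after the arc argument, either a $4$-cycle or nothing new, and one checks $cc_2\le 3$ directly). Finally, Theorem~\ref{thm1.4} applied to the $2$-connected graph $G$ containing this subgraph gives $cc_k(G)\leq 2k-1$.

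The main obstacle is the bookkeeping in the non-Hamiltonian case: making sure that the two $u$–$C$ paths really do produce either a cycle of length $\geq 2k$ or an honest subdivision of some $C_{2k-1}^{ct}$ with $t\leq k-1$, and checking that the shorter arc length $t$ is genuinely at most $k-1$ so that Theorem~\ref{thm1.2} applies. One has to be slightly careful that the ``longer arc plus detour through $u$'' is a genuine cycle (the paths $P_1,P_2$ are internally disjoint from $C$ and from each other by Menger), and that when $P_1$ or $P_2$ has length $\geq 2$ we are in the easy regime where the new cycle already has length $\geq 2k$. I expect no difficulty in the Hamiltonian subcase or in the first dichotomy $C(G)\geq 2k$ versus $C(G)=2k-1$; the delicate point is purely the case analysis on the attachment structure of the off-cycle vertex.
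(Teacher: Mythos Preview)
Your argument is correct, but it is much more elaborate than necessary, and the paper's proof is a three-line application of results you already invoke. The paper simply observes that $m\geq(k-1)(n-1)+1=\frac{(2k-2)(n-1)}{2}+1$ forces $C(G)\geq 2k-1$ via Lemma~\ref{lem3.6}, takes a longest cycle $C$, notes that $cc_k(C)\leq 2k-1$ by Theorem~\ref{thm1.1} (since $|C|\geq 2k-1$ the value is either $2k-1$ or $k$), and applies Theorem~\ref{thm1.4} to the $2$-connected graph $G$ containing $C$.

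The point you missed is that Theorem~\ref{thm1.4} does not require the subgraph $H$ to be spanning. You actually use exactly this argument in your Hamiltonian subcase (``$G$ contains $C_{2k-1}$ as a subgraph and, being $2$-connected, Theorem~\ref{thm1.4} together with $cc_k(C_{2k-1})=2k-1$ yields $cc_k(G)\leq 2k-1$''), but that reasoning applies verbatim whenever $C(G)=2k-1$, regardless of whether there are vertices off $C$. Your entire Menger/chorded-cycle/subdivision analysis for the non-Hamiltonian case is therefore superfluous: once a $(2k-1)$-cycle sits inside a $2$-connected $G$, Theorem~\ref{thm1.4} already gives $cc_k(G)\leq 2k-1$. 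Your route does work (and incidentally would give the slightly sharper bound $cc_k(G)\leq 2k-t$ in the non-Hamiltonian case), but for the stated theorem it is unnecessary machinery, and the $k=2$ aside and the bookkeeping about $|P_1|,|P_2|$ and the contradiction with $C(G)=2k-1$ could all be dropped.
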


\begin{proof}
Since $m\geq(k-1)(n-1)+1$, it follows that $C(G)\geq 2k-1$ by Lemma \ref{lem3.6}. Let $C$ be a cycle of length $C(G)$, then $cc_{k}(C)\leq 2k-1$. With the aid of Theorem \ref{thm1.4}, we have that $cc_{k}(G)\leq cc_{k}(C)\leq 2k-1$.
\end{proof}

With the similar argument in Theorem \ref{thm3.7}, but we can obtain the following stronger result.

\begin{thm}\label{thm3.8}
If $G$ is a $2$-connected graph with $m\geq\frac{(2k-1)(n-1)}{2}+1$, $n\geq2k$, then $cc_{k}(G)=k$.
\end{thm}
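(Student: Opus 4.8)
The plan is to mimic the proof of Theorem~\ref{thm3.7}, but push the circumference estimate high enough to land in the regime where Corollary~\ref{cor1.5} applies. Recall that Corollary~\ref{cor1.5} says a $2$-connected graph containing a cycle of length at least $2k$ has $k$-color connection number exactly $k$. So it suffices to show that the hypothesis $m\geq\frac{(2k-1)(n-1)}{2}+1$ forces $C(G)\geq 2k$.

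First I would invoke Lemma~\ref{lem3.6} with $c=2k$. The lemma requires $m\geq\frac{(c-1)(n-1)}{2}+1=\frac{(2k-1)(n-1)}{2}+1$, which is exactly our hypothesis, and it requires $3\leq c\leq n$, i.e.\ $3\leq 2k\leq n$; the condition $2k\leq n$ is the assumption $n\geq 2k$ in the statement, and $2k\geq 3$ holds since $k\geq 2$. Hence Lemma~\ref{lem3.6} yields $C(G)\geq 2k$.

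Next I would let $C$ be a longest cycle of $G$, so $C$ has length $C(G)\geq 2k$. Since $G$ is $2$-connected and contains $C$ as a cycle of length at least $2k$, Corollary~\ref{cor1.5} gives $cc_{k}(G)=k$ directly. (Alternatively, one can argue in two steps: Theorem~\ref{thm1.1} gives $cc_{k}(C)=k$ because the cycle has length $\geq 2k$, and then Theorem~\ref{thm1.4} gives $cc_{k}(G)\leq cc_{k}(C)=k$; combined with the trivial lower bound $cc_{k}(G)\geq k$ this yields equality.) Either way we obtain $cc_{k}(G)=k$, completing the proof.

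There is essentially no obstacle here; the whole content is checking that the numerical hypotheses line up so that Lemma~\ref{lem3.6} can be applied with $c=2k$ rather than $c=2k-1$ (as in Theorem~\ref{thm3.7}), and then noting that reaching circumference $2k$ upgrades the conclusion from the mere upper bound $cc_k(G)\le 2k-1$ to the exact value $cc_k(G)=k$ via Corollary~\ref{cor1.5}. The only point worth a sentence of care is verifying the side conditions $3\le 2k\le n$ of Lemma~\ref{lem3.6}, which is where the hypothesis $n\geq 2k$ (slightly stronger than the $n\geq 2k-1$ of Theorem~\ref{thm3.7}) is used.
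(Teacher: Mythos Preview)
Your proposal is correct and matches the paper's own proof essentially verbatim: apply Lemma~\ref{lem3.6} with $c=2k$ to obtain $C(G)\geq 2k$, then invoke Corollary~\ref{cor1.5} to conclude $cc_{k}(G)=k$. Your additional verification of the side conditions $3\le 2k\le n$ and the alternative two-step route via Theorem~\ref{thm1.1} and Theorem~\ref{thm1.4} are fine embellishments, but the core argument is identical.
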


\begin{proof}
Since $m\geq\frac{(2k-1)(n-1)}{2}+1$, it follows that $C(G)\geq 2k$ by Lemma \ref{lem3.6}. Thus, we have that $cc_{k}(G)=k$ by Corollary \ref{cor1.5}.
\end{proof}

\section{ Comparing $cc_{k}(G)$ and $rc(G)$ }

In this section, we investigate the relationship between the $k$-color connection number and the rainbow connection number for a connected graph, we first present two lemmas.

\begin{lem}\label{lem4.1}\cite{CJMZ}
For each integer $n\geq4$, then $rc(C_n)=\lceil \frac{n}{2}\rceil$.
\end{lem}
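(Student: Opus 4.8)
The plan is to establish the bounds $rc(C_n)\le\lceil n/2\rceil$ and $rc(C_n)\ge\lceil n/2\rceil$ separately, handling the two parities of $n$ in parallel. Throughout write $V(C_n)=\{v_1,\dots,v_n\}$ in cyclic order, $e_i=v_iv_{i+1}$ with indices read modulo $n$, and put $r=\lceil n/2\rceil$. The starting point is that in a cycle the only two paths joining $v_a$ to $v_b$ are the two arcs, whose lengths are $d$ and $n-d$ with $d=d_{C_n}(v_a,v_b)$; hence $C_n$ is rainbow connected under a coloring $c$ precisely when for every pair at least one of the two arcs is rainbow, and a rainbow path with $\ell$ edges forces at least $\ell$ colors. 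Note that for every pair the shorter arc has length at most $\lfloor n/2\rfloor$.

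For the upper bound I would exhibit a concrete $r$-coloring. When $n=2r$ is even, color $e_i$ by $((i-1)\bmod r)+1$; when $n=2r-1$ is odd, color $e_i$ by $i$ for $1\le i\le r$ and by $i-r$ for $r<i\le n$. In either case each color class consists of edges lying at cyclic edge-distance exactly $\lfloor n/2\rfloor$ from one another (in the odd case the color $r$ occurs only once), so any block of at most $\lfloor n/2\rfloor$ consecutive edges --- within which any two edges lie at cyclic distance at most $\lfloor n/2\rfloor-1$ --- is rainbow. Since between any two vertices at least one of the two connecting arcs has length at most $\lfloor n/2\rfloor$, this already makes $C_n$ rainbow connected by the criterion above, giving $rc(C_n)\le r$. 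The only slightly fiddly point is the claim about blocks of consecutive edges, but via the edge-distance remark it is immediate, so no real obstacle arises here.

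For the lower bound fix $u=v_1$ and $v=v_{1+\lfloor n/2\rfloor}$, whose two arcs have lengths $\lfloor n/2\rfloor$ and $\lceil n/2\rceil$. If $n$ is even both arcs have length $n/2$, so a rainbow $u$--$v$ path already needs $n/2=r$ colors and we are done. If $n=2r-1$ is odd, suppose $C_n$ were rainbow connected with $t\le r-1$ colors: if $t\le r-2$ then both arcs above have length exceeding $t$ and cannot be rainbow, a contradiction, so $t=r-1$. Then for every index $j$ the long arc between $v_j$ and $v_{j+r-1}$ has $r>r-1$ edges, forcing the short arc $A_j=e_je_{j+1}\cdots e_{j+r-2}$ to be rainbow; being a set of $r-1$ distinctly colored edges out of $r-1$ available colors, $A_j$ uses every color. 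Comparing $A_j$ with $A_{j+1}=(A_j\setminus\{e_j\})\cup\{e_{j+r-1}\}$, both using all colors, gives $c(e_{j+r-1})=c(e_j)$ for every $j$, so $c$ is invariant under the cyclic shift by $r-1$; as $\gcd(r-1,2r-1)=1$ this shift generates $\mathbb{Z}_n$, whence $c$ is constant --- impossible, since $n\ge4$ provides a pair of nonadjacent vertices joined only by monochromatic paths of length $\ge2$. Therefore $rc(C_n)\ge r$, and together with the upper bound $rc(C_n)=\lceil n/2\rceil$. The one genuine subtlety in the whole proof is this parity-dependent periodicity argument in the odd case; everything else is routine.
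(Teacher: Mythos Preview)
Your proof is correct. The upper bound coloring is the standard antipodal one, and your verification via edge-distance is clean. For the lower bound, the even case is just the diameter bound $rc(G)\ge\mathrm{diam}(G)$, and your odd-case argument is a nice parity trick: forcing every short arc of length $r-1$ to be a bijection onto the color set, deducing $c(e_{j+r-1})=c(e_j)$ for all $j$, and then using $\gcd(r-1,2r-1)=1$ to collapse $c$ to a constant. One cosmetic remark: once you reach ``$c$ is constant'' you can simply say this contradicts $t=r-1\ge 2$ (since odd $n\ge 5$ gives $r\ge 3$); invoking nonadjacent vertices is correct but slightly roundabout.

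As for comparison with the paper: there is nothing to compare. Lemma~\ref{lem4.1} is quoted from \cite{CJMZ} without proof, so you have supplied an argument where the paper gives none. The original proof in \cite{CJMZ} proceeds along similar lines --- an explicit antipodal coloring for the upper bound, and for the odd lower bound a pigeonhole/shifting argument on arcs --- so your approach is in the same spirit as the source, though your periodicity-via-$\gcd$ formulation is a particularly tidy way to finish the odd case.
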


\begin{lem}\label{lem4.2}\cite{CJMZ}
For $n\geq3$, the rainbow connection number of the wheel $W_n$ is
\begin{equation*}
rc(G)=
\left\{
  \begin{array}{ll}
    1 & \hbox{ if $n=3$,}\\
    2 & \hbox{ if  $4\leq n\leq 6$,}\\
    3 & \hbox{ if $n\geq7$.} \\
    \end{array}
\right.
\end{equation*}
\end{lem}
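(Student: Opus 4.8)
The plan is to treat $n$ in three ranges: $n=3$, $4\le n\le 6$, and $n\ge 7$. The case $n=3$ is immediate, since $W_3=C_3+K_1=K_4$ is complete and hence $rc(W_3)=1$. For every $n\ge 4$ the wheel is not complete (for instance, the rim vertices $v_1,v_3$ are nonadjacent) but has diameter $2$, because any two rim vertices are joined through the hub $v_0$; so the standard bound $rc(G)\ge \mathrm{diam}(G)$ already gives $rc(W_n)\ge 2$. It therefore remains to (i) improve the lower bound to $3$ when $n\ge 7$, and (ii) construct matching rainbow colorings in each range.

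For the lower bound when $n\ge 7$ I would argue by contradiction, assuming a rainbow coloring with only $2$ colors. The $n$ spokes then carry only $2$ colors, so by pigeonhole one color class contains a set $S$ of at least $\lceil n/2\rceil\ge 4$ spokes; write $S$ also for the corresponding rim vertices. The key combinatorial fact is that any $4$ vertices of $C_n$ with $n\ge 7$ contain two at cyclic distance at least $3$: otherwise all of them would lie in the $5$ consecutive rim vertices $\{p-2,p-1,p,p+1,p+2\}$ around any one of them $p$, yet every $4$-element subset of those $5$ vertices contains a pair at cyclic distance $\ge 3$, namely one of $\{p-2,p+1\}$, $\{p-1,p+2\}$, $\{p-2,p+2\}$. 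Pick such a pair $u,w\in S$. Its hub path $uv_0w$ is monochromatic because both spokes lie in $S$, and every other $u$--$w$ path has length at least $3$: the two rim arcs from $u$ to $w$ have lengths $d_{C_n}(u,w)\ge 3$ and $n-d_{C_n}(u,w)\ge \lceil n/2\rceil\ge 3$, while any path through $v_0$ other than $uv_0w$ must use a rim edge and so has length $\ge 3$. A path of length $\ge 3$ is never rainbow in a $2$-coloring, so $u$ and $w$ admit no rainbow path, a contradiction; hence $rc(W_n)\ge 3$ for $n\ge 7$.

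For the upper bounds, when $4\le n\le 6$ I would exhibit an explicit rainbow $2$-coloring: properly $2$-color the rim (possible for $C_4$ and $C_6$, and with one unavoidable defect for $C_5$) and choose the spoke colors so that each remaining ``far'' pair is repaired either through the hub or along a length-$2$ rim path; checking the handful of pairs is routine and, together with $rc(W_n)\ge 2$, gives $rc(W_n)=2$. When $n\ge 7$ I would use the following $3$-coloring: give color $1$ to every rim edge, and color the spokes alternately $2,3,2,3,\dots$ around the rim (for odd $n$ exactly one pair of consecutive spokes repeats a color, which does no harm). Then $v_0$ reaches each $v_i$ along its spoke; two rim vertices with different spoke colors are joined by the rainbow path $v_iv_0v_j$; and two rim vertices $v_i,v_j$ with a common spoke color $a$ are either rim-adjacent, hence joined by their color-$1$ edge, or joined by $v_iv_{i'}v_0v_j$, where $v_{i'}$ is a rim neighbour of $v_i$ whose spoke has the other color $b$ (in the almost-alternating pattern every vertex has such a neighbour, as one checks directly), and this path has colors $1,b,a$, pairwise distinct. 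Hence $rc(W_n)\le 3$, matching the lower bound.

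The main obstacle is the lower bound for $n\ge 7$: one must isolate precisely why $2$ colors cannot suffice, and the cleanest mechanism is the small extremal observation above --- any $4$ vertices of $C_n$, $n\ge 7$, contain a pair at cyclic distance $\ge 3$ --- combined with the remark that such a pair is useless under a $2$-coloring whose hub path between them happens to be monochromatic. A secondary nuisance is making every upper-bound verification genuinely exhaustive: in particular handling the parity defect in the alternating spoke pattern for odd $n$, and disposing of the three small wheels $W_4,W_5,W_6$ by hand.
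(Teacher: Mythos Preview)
The paper does not prove this lemma; it is quoted from Chartrand, Johns, McKeon and Zhang \cite{CJMZ} as a known result, so there is no in-paper argument to compare against. Your proof is correct and follows what is essentially the standard line: the diameter bound gives $rc(W_n)\ge 2$ for $n\ge 4$; explicit colourings give the matching upper bounds; and for $n\ge 7$ the pigeonhole step (at least four spokes of one colour) together with the extremal fact that any four vertices on $C_n$, $n\ge 7$, contain a pair at cyclic distance $\ge 3$ isolates a pair $u,w$ whose only length-$2$ path $uv_0w$ is monochromatic while every other $u$--$w$ path has length $\ge 3$ and hence cannot be rainbow with two colours. The verification that in your $3$-colouring every rim vertex has a neighbour whose spoke carries the other colour, even at the parity defect when $n$ is odd, is sound. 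The only parts left informal are the finite checks for $W_4,W_5,W_6$, which are routine.
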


Recall the fact that $cc_{k}(W_n)=k$ and $rc(W_n)\leq3$ for $3\leq k\leq n$, which implies that the $k$-color connection number is greater than the rainbow connection number for the wheel. However, note that $cc_{k}(C_n)=k$ and $rc(C_n)=\lceil \frac{n}{2}\rceil$ for any sufficiently large $n$, it follows that the cycle with very large length has the opposite property that the rainbow connection number is larger than the $k$-color connection number. These results naturally lead the following question: for two given positive integers $a$ and $b$, whether there exists a connected graph such that $cc_{k}(G)=a$ and $rc(C)=b$?

The generalised $\theta$-graph $G=Q_{q_1,q_2,\ldots,q_t}$ is defined as the union of $t\geq2$ paths $Q_1,Q_2,\ldots,Q_t$ with length $q_1\geq \ldots \geq q_t\geq1$, where $q_{t-1}\geq2$, and the paths are pairwise internally vertex-disjoint with the same two end-vertices. Note that $G$ is 2-connected, and the length of the longest cycle is $q_1+q_2$. In order to resolve the above question, we first study the $k$-color connection number and the rainbow connection number for the generalised $\theta$-graph.

\begin{thm}\label{thm4.1}
Let $G=Q_{q_1,q_2,\ldots,q_t}$ be a generalised $\theta$-graph, then
\begin{equation*}
cc_{k}(G)=
\left\{
  \begin{array}{ll}
    k & \hbox{ if  $q_1+q_2\geq2k$,}\\
    2k-1 & \hbox{ if  $q_1+q_2=2k-1$ and $t=2$,}\\
    q_1+1 & \hbox{ if $q_1+q_2=2k-1$ and $t\geq3$,} \\
    DNE & \hbox{ if $q_1+q_2\leq2k-2$.}\\
    \end{array}
\right.
\end{equation*}
\end{thm}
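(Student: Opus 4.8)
The plan is to treat the four cases of the formula separately, using the circumference of $G$, namely $q_1+q_2$, as the organising parameter, and to handle each case with the machinery already established. The last case, $q_1+q_2\leq 2k-2$, is immediate: any two vertices of $G$ lie on a common cycle of length at most $q_1+q_2\leq 2k-2$, and in fact the longest cycle has length exactly $q_1+q_2$, so no path between the two degree-$t$ branch vertices can use more than $q_1+q_2-1\leq 2k-3<k$ colours when... more carefully, one should argue directly that $G$ contains no path using $k$ colours between the branch vertices, since every such path is contained in the union of at most two of the $Q_i$'s and hence has at most $q_1+q_2\le 2k-2$ edges, but the relevant obstruction is that the two branch vertices are joined only by paths lying in $Q_i\cup Q_j$, whose number of edges is at most $q_1+q_2$; a path using $k$ distinct colours needs at least $k$ edges, and here I must instead invoke Theorem~\ref{thm1.1} together with Theorem~\ref{thm1.4} in the contrapositive, or argue that $G$ itself is a subdivision of a small generalised $\theta$-graph for which the invariant genuinely does not exist. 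The cleanest route is: if $cc_k(G)$ existed, then every pair of branch vertices would be connected by a path with $\geq k$ colours, but such a path has $\le \max_i q_i\le q_1$ edges if it uses a single $Q_i$, or $\le q_1+q_2\le 2k-2$ edges otherwise, and one pushes this to a contradiction via the cycle bound — I expect this case to need a short but careful argument, not merely a citation.

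For the case $q_1+q_2\geq 2k$: since $G$ is $2$-connected and contains a cycle of length $q_1+q_2\geq 2k$ (the cycle $Q_1\cup Q_2$), Corollary~\ref{cor1.5} gives $cc_k(G)=k$ directly. This case is essentially free.

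For the case $q_1+q_2=2k-1$ with $t=2$: here $G$ is exactly the cycle $C_{2k-1}$, so Theorem~\ref{thm1.1} gives $cc_k(G)=2k-1$. Again immediate. The substantive case is $q_1+q_2=2k-1$ with $t\geq 3$, where the claimed value is $q_1+1$. For the upper bound $cc_k(G)\leq q_1+1$ I would exhibit an explicit colouring: colour the internal edges of $Q_1$ with $q_1-1$ distinct colours $1,\dots,q_1-1$, colour the two end-edges of $Q_1$ with a new colour $q_1$ and colour, say, $Q_2$ with colour $q_1+1$ on one end-edge and re-use earlier colours cleverly on its remaining edges; the key point is that any pair of vertices can be routed so as to traverse almost all of $Q_1$ (which already supplies $q_1$ colours) plus at least one more colour picked up on the short detour through $Q_2$ or $Q_3$, giving $q_1+1\geq k$ colours since $q_1\geq k-1$ (because $q_1\geq q_2$ and $q_1+q_2=2k-1$ force $q_1\geq k-1$, indeed $q_1\ge k$ unless $q_1=q_2$ is impossible for odd sum... so $q_1\ge k$). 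Wait: $q_1+q_2=2k-1$ odd forces $q_1\neq q_2$, hence $q_1\geq k$, so $q_1+1\ge k+1>k$, consistent with the formula exceeding $k$. The colouring must be designed so that the worst pair — the two branch vertices, reachable only through whole paths $Q_i$ — still gets $q_1+1$ colours; routing branch-to-branch through $Q_1$ alone gives only $q_1$ colours (colours $1,\dots,q_1-1$ and $q_1$), so one must instead route through $Q_1$ for part of the way is impossible since branch vertices are the endpoints; hence branch-to-branch must use $Q_i\cup Q_j$, and the colouring has to guarantee $\geq q_1+1$ colours there. This is the delicate design step.

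For the lower bound $cc_k(G)\geq q_1+1$ in that case, I would argue that with only $q_1$ colours some pair of vertices fails: consider the two branch vertices $u,v$; every $u$--$v$ path is either a single $Q_i$ (at most $q_1$ edges, hence at most $q_1$ colours, but needing exactly $q_1$ forces $Q_1$ rainbow) or a union $Q_i\cup Q_j$ forming a cycle through $u,v$ — and a counting/pigeonhole argument on how colours can be distributed over the $t\ge 3$ paths, combined with the constraint that each such combined path must reach $k$ colours while the paths share only the two endpoints, should force a $(q_1+1)$st colour. \textbf{The main obstacle} I anticipate is precisely this lower bound together with the matching explicit colouring in the $t\ge 3$, $q_1+q_2=2k-1$ case: one must simultaneously show $q_1$ colours are insufficient (an extremal argument tailored to the generalised $\theta$-graph, likely by assuming a good colouring and tracking which colours appear on which $Q_i$) and construct a valid colouring with $q_1+1$ colours that works for all pairs, especially the branch-vertex pair. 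The other three cases are routine consequences of Theorem~\ref{thm1.1}, Corollary~\ref{cor1.5}, and the definition, so essentially all the work is concentrated in this one branch of the case analysis.
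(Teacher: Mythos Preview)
Your treatment of the three easy cases matches the paper: $q_1+q_2\ge 2k$ via Corollary~\ref{cor1.5}, $q_1+q_2=2k-1$ with $t=2$ via Theorem~\ref{thm1.1}, and for $q_1+q_2\le 2k-2$ the paper simply exhibits a pair of vertices on $Q_1$ at distance $k-1$ from one branch vertex and observes that every path between them has fewer than $k$ edges---no need for contrapositives of Theorem~\ref{thm1.4} or subdivision arguments.

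In the substantive case $q_1+q_2=2k-1$, $t\ge 3$, both halves of your plan diverge from the paper and both have real problems. For the \emph{upper} bound you propose to build an explicit colouring; the paper instead observes that $H=Q_1\cup Q_2\cup Q_3$ is a subdivision of the chorded cycle $C_{2k-1}^{cq_2}$, so Theorem~\ref{thm2.2} together with Theorem~\ref{thm1.2} immediately gives $cc_k(G)\le cc_k(H)\le cc_k(C_{2k-1}^{cq_2})=2k-q_2=q_1+1$. You already have all the ingredients for this one-line argument and are making life much harder than necessary by trying to colour from scratch.

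For the \emph{lower} bound your plan has a genuine gap. You focus on the two branch vertices $u,v$, but every simple $u$--$v$ path in $G$ is exactly one of the $Q_i$ (your ``union $Q_i\cup Q_j$'' is a cycle, not a path). Hence the branch-vertex pair only forces some $Q_i$ to carry $\ge k$ colours, which is perfectly compatible with a total palette of size $q_1$ (just rainbow-colour $Q_1$). No pigeonhole over the $Q_i$'s will squeeze $q_1+1$ out of this pair. The paper's argument instead works on the cycle $C=Q_1\cup Q_2=v_1v_2\cdots v_{2k-1}$ and looks at \emph{non-branch} pairs $v_i,v_{i+k}$: if two edges $v_iv_{i+1}$ and $v_jv_{j+1}$ with $1\le i<j\le 2k-q_2$ share a colour, then one of the pairs $(v_i,v_{i+k})$ or $(v_j,v_{j+k})$ has no path using $k$ colours. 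This forces $2k-q_2=q_1+1$ edges on $C$ to be pairwise distinctly coloured. The obstructing pair is not the branch pair, and that is exactly the idea your outline is missing.
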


\begin{proof}
Let $Q_1\cup Q_2\cup\ldots\cup Q_t$ be the paths of $G$, and $x,y$ be their common end-vertices. Let $C=Q_1\cup Q_2=v_1v_2\ldots v_{q_1+q_2}$ denote the longest cycle of length $q_1+q_2$, without loss of generality, assume that $x=v_{k+1}$ and $y=v_{k+1-q_2}$. The result trivially holds for $q_1+q_2\geq2k$ by Corollary \ref{cor1.5}. Next, we assume that $q_1+q_2=2k-1$. Suppose that $t=2$, then $cc_{k}(G)=2k-1$ by Theorem \ref{thm1.1}. Suppose that $t\geq3$, consider the subgraph $H=Q_1\cup Q_2\cup Q_3$ of $G$. It is known that $H$ is also 2-connected, and a subdivision of $C_{2k-1}^{cq_2}$. Combining Theorem \ref{thm1.2} and Theorem \ref{thm2.2}, $cc_{k}(G)\leq cc_{k}(H)\leq cc_{k}(C_{2k-1}^{cq_2})=2k-q_2=q_1+1$. In order to show that $cc_{k}(G)\geq q_1+1$, without loss of generality, assume that $Q_1=v_{k+1-q_2}v_{k+q_2}\ldots v_1v_{2k-1}\ldots v_{k+1}$, and $Q_2=v_{k+1-q_2}v_{k+2-q_2}\ldots v_{k+1}$. We claim that no two edges of the form $v_iv_{i+1}$ for $1\leq i\leq 2k-q_2$ may share a color. Assume, to the contrary, that some pair of edges $v_iv_{i+1}$ and $v_jv_{j+1}$ share a color, where $1\leq i<j\leq 2k-q_2$. Suppose that $j\leq i+k-1$, the pair of vertices $v_i$ and $v_{i+k}$ are not connected by a path using $k$ colors. Suppose that $j>i+k-1$, the pair of vertices $v_j$ and $v_{j+k}$ are not connected by a path using $k$ colors. Thus, we have that $cc_{k}(G)\geq 2k-q_2=q_1+1$. At last, we may assume that $q_1+q_2\leq2k-2$, consider the pair of vertices $v_{k+1-q_2}$ and $v_{2k-q_2}$, there exists no path of length at least $k$ between them, so $G$ is not $k$-color connected.
\end{proof}

\begin{thm}\label{thm4.2}
Let $G=Q_{q_1,q_2,\ldots,q_t}$ be a generalised $\theta$-graph, then $rc(G)=\lceil \frac{q_1+\cdots+q_t}{2}\rceil$.
\end{thm}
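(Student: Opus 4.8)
The plan is to prove $rc(G)=\lceil m/2\rceil$, where $m=q_1+\cdots+q_t$ is the number of edges of $G$, by establishing the two inequalities separately. For $t=2$ the graph is the cycle $C_{q_1+q_2}$ and the statement is exactly Lemma~\ref{lem4.1}, so I would record that case first and then assume $t\ge 3$. Throughout, write $x,y$ for the two branch vertices and, for each path $Q_i$, split it at its midpoint into an \emph{$x$-arm} (the $\lceil q_i/2\rceil$ edges nearer $x$) and a \emph{$y$-arm} (the $\lfloor q_i/2\rfloor$ edges nearer $y$). One cannot lean on a subdivision argument here as in Section~2, since passing to a subdivision typically raises $rc$ rather than leaving it unchanged, so both bounds have to be built directly.

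For the upper bound I would produce an explicit rainbow connecting coloring. First color the longest cycle $C=Q_1\cup Q_2$ by the optimal scheme behind Lemma~\ref{lem4.1}, organized so that from $x$ (resp.\ from $y$) one reaches every vertex of $C$ along a rainbow path whose colors lie on the $x$-arms (resp.\ $y$-arms) of $Q_1,Q_2$. Then add the ears $Q_3,\dots,Q_t$ one at a time: when $Q_i$ is added, give its $x$-arm $\lceil q_i/2\rceil$ colors not used on any earlier $x$-arm, and give its $y$-arm $\lfloor q_i/2\rfloor$ colors \emph{recycled} from colors already placed on $x$-arms, chosen so that the $y$-arm is internally rainbow and the recycled colors miss the rainbow path any other vertex would use through $y$. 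The arm split is the engine: an internal vertex $w$ of $Q_i$ reaches $x$ via its $x$-arm on private colors and reaches $y$ via its $y$-arm, so every pair with an endpoint internal to an ear is handled by splicing such a stub onto a rainbow path already built from a hub. The total stays at $\lceil m/2\rceil$ because the ceilings are pooled — the ``$+1$''s coming from odd-length ears are absorbed by letting later $y$-arms overlap earlier ears' fresh colors. I expect the fiddly part here to be choosing the recycling \emph{globally} rather than greedily, so that no needed rainbow path ever meets a repeated color, especially for two vertices internal to the same ear on opposite arms.

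For the lower bound I would look for a vertex configuration forcing $\lceil m/2\rceil$ colors. The mechanism: with $r$ colors a rainbow path has at most $r$ edges, and a path joining the midpoints of two distinct paths $Q_i,Q_j$ is — once $r$ is below the threshold at which a detour through a third path becomes short enough — forced to be an $x$-arm of $Q_i$ glued at $x$ to an $x$-arm of $Q_j$, or the two $y$-arms glued at $y$; likewise, joining a hub to a midpoint forces each $x$-arm (resp.\ $y$-arm) to be internally rainbow on $\lceil q_i/2\rceil$ (resp.\ $\lfloor q_i/2\rfloor$) colors. Thus each unordered pair $\{i,j\}$ selects a side, and on that side the two selected arms carry disjoint rainbow color sets. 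In the extreme pattern where every pair selects the same side, the $t$ arms on that side are pairwise disjoint and already account for $\sum_i\lceil q_i/2\rceil\ge\lceil m/2\rceil$ colors; the remaining work is to push the same conclusion through every other selection pattern. I expect this case analysis over the side-selection patterns — together with the boundary bookkeeping (short ears, and the parity producing the ceiling) — to be the main obstacle of the whole proof.
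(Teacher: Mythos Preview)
Your suspicion that the lower bound is ``the main obstacle'' is well placed: it is in fact unprovable, because the stated equality is false. Take $G=Q_{2,2,2,2}=K_{2,4}$ with branch vertices $x,y$ and middle vertices $a_1,a_2,a_3,a_4$, and color $(xa_i,ya_i)$ by $(1,1),(1,2),(2,1),(2,2)$ for $i=1,2,3,4$. Every non-adjacent pair is joined by a rainbow path of length two (for instance $a_1$--$a_2$ via $y$, $a_3$--$a_4$ via $y$, every other $a_i$--$a_j$ via $x$, and $x$--$y$ via $a_2$), so $rc(K_{2,4})=2$, not $\lceil 8/2\rceil=4$; more generally $rc(K_{2,t})$ grows on the order of $\sqrt{t}$, not $t$. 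Hence no argument of the shape you sketch --- selecting a side for each pair of midpoints and then deducing pairwise-disjoint color sets on the chosen arms --- can go through: in the coloring above the $x$-arms of $Q_1,Q_2$ share a color and the $y$-arms of $Q_1,Q_3$ share a color, yet every required rainbow path still exists, because distinct pairs are free to reuse the same colors.

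The paper's own proof has the same gap. Its one-line lower bound infers $rc(G)\ge\lceil\sum_i q_i/2\rceil$ from the observation that each pair $Q_i,Q_j$ ``needs'' $\lceil(q_i+q_j)/2\rceil$ colors, but pairwise bounds give only the maximum over pairs, not the sum, and even the pairwise claim ignores detours through a third $Q_\ell$. Your upper-bound plan is aimed at the inequality $rc(G)\le\lceil m/2\rceil$, and that direction can survive; the paper reaches it by a different and simpler device --- partition the $t$ paths into $\lfloor t/2\rfloor$ edge-disjoint cycles (plus possibly one leftover path) and color each piece with a fresh palette of size $\lceil |C_i|/2\rceil$ --- so your color-recycling scheme is not needed for that half. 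But equality fails in general, and the theorem as stated is incorrect.
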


\begin{proof}
Let $Q_1\cup Q_2\cup\ldots\cup Q_t$ be the paths of $G$, and $x,y$ be their common end-vertices. Consider each pair of paths $Q_i$ and $Q_j$ with $1\leq i\neq j\leq t$. In order to guarantee that any two vertices of $Q_i$ and $Q_j$ are connected by a rainbow path, the number of colors is needed to be at least $\lceil \frac{q_i+q_j}{2}\rceil$, it follows that $rc(G)\geq\lceil \frac{q_1+\cdots +q_t}{2}\rceil$. It remains to show that $rc(G)\leq\lceil \frac{q_1+\cdots +q_t}{2}\rceil$. Let $C_1\cup\cdots\cup C_{\lceil \frac{t}{2}\rceil}$ is a partition of t paths $Q_1,Q_2,\ldots,Q_t$, where $C_i$ is a cycle that consist of two paths for $1\leq i\leq \lfloor \frac{t}{2}\rfloor$. If $\lceil \frac{t}{2}\rceil=\lfloor \frac{t}{2}\rfloor+1$, then $C_{\lceil \frac{t}{2}\rceil}$ is a path. Take a partition of these $t$ paths, still denoted by $C_1\cup\cdots \cup C_{\lceil \frac{t}{2}\rceil}$, satisfying the number of cycles of even length is as possible as much. We define a coloring of $G$: color the edges of $C_i$ with $rc(C_i)$ colors such that $C_i$ is rainbow connected for $1\leq i\leq \lfloor \frac{t}{2}\rfloor$. If $\lceil \frac{t}{2}\rceil=\lfloor \frac{t}{2}\rfloor+1$, suppose that $C_{\lceil \frac{t}{2}\rceil}=v_1\ldots v_\ell$, then we color the edges $v_iv_{i+1}$ and $v_{\ell-1-i}v_{\ell-i}$ with color $i$ for $1\leq i\leq \lfloor \frac{\ell}{2}\rfloor$. Notice that two color sets that used in each pair of cycles $C_i$ and $C_j$ is completely distinct, which means this coloring uses $\lceil \frac{q_1+\cdots +q_t}{2}\rceil$. It is easy to check out that $G$ is rainbow connected under this coloring. Thus, $rc(G)\leq \lceil \frac{q_1+\cdots +q_t}{2}\rceil$. The proof is complete.

\end{proof}

With the help of Theorem \ref{thm4.1} and  Theorem \ref{thm4.2}, we obtain the following result.

\begin{thm}\label{thm4.3}
Let $a$ and $b$ be integers with $b\geq a$ and $k\leq a\leq 2k-2$. Then there exists a connected graph such that $cc_{k}(G)=a$ and $rc(G)=b$.
\end{thm}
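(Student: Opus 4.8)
The plan is to realize the pair $(a,b)$ by a carefully chosen generalised $\theta$-graph $G=Q_{q_1,q_2,\ldots,q_t}$, using Theorem~\ref{thm4.1} to control $cc_k(G)$ and Theorem~\ref{thm4.2} to control $rc(G)$. Since $k\leq a\leq 2k-2$, the target value of $a$ lies in the regime where the middle branch of Theorem~\ref{thm4.1} applies: I want $q_1+q_2=2k-1$ and $t\geq 3$, so that $cc_k(G)=q_1+1$. Setting $q_1=a-1$ forces $cc_k(G)=a$, and then $q_2=2k-1-q_1=2k-a$; note $q_2\geq 2$ exactly because $a\leq 2k-2$, and $q_1\geq q_2$ because $a-1\geq 2k-a$ is equivalent to $a\geq k+\tfrac12$, i.e. $a\geq k+1$ — the borderline $a=k$ would need separate handling, but there $cc_k(G)=k$ is already available from any $2$-connected graph with a cycle of length $\geq 2k$, so the construction below can be adjusted (or one simply takes $q_1=q_2=k$ there).

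Next I would pin down $rc(G)=\lceil (q_1+\cdots+q_t)/2\rceil = b$ by choosing the remaining path lengths $q_3,\ldots,q_t$. With $q_1+q_2=2k-1$ fixed, I need $q_3+\cdots+q_t$ to make the total sum equal to $2b$ or $2b-1$. The simplest realization: take $t=3$ and a single extra path $Q_3$ of length $q_3$ with $1\leq q_3\leq q_2$, chosen so that $\lceil (2k-1+q_3)/2\rceil=b$; this is solvable whenever $b$ is in the achievable range, namely $b\geq \lceil (2k-1+1)/2\rceil = k$ and $b\leq \lceil (2k-1+q_2)/2\rceil$. Since we only assume $b\geq a\geq k$, the lower end is fine, but the upper end $b\leq \lceil (2k-1+q_2)/2\rceil = \lceil (4k-1-a)/2\rceil$ may fail for large $b$; in that case I introduce more paths — adding a path of length $\ell$ raises the rainbow-sum by $\ell$ while, as long as $\ell\leq q_2$ (so $q_1$ and $q_2$ stay the two largest), leaving $cc_k(G)=q_1+1=a$ untouched by Theorem~\ref{thm4.1}. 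Using enough short paths (each of length $1$ or $2$) one can hit any prescribed sum $q_1+\cdots+q_t\in\{2b-1,2b\}$, hence any prescribed value of $\lceil(q_1+\cdots+q_t)/2\rceil=b$.

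The main obstacle is the bookkeeping at the two ends of the parameter range: verifying that the constraints $q_1\geq q_2\geq\cdots\geq q_t\geq 1$ and $q_{t-1}\geq 2$ required by the definition of a generalised $\theta$-graph can be simultaneously met for every admissible $(a,b,k)$, and in particular handling the extreme case $a=k$ (where the middle branch of Theorem~\ref{thm4.1} does not give $cc_k=a$ and one must instead force $q_1+q_2\geq 2k$ while still steering $rc$ to $b$) and the case $b=a$ small. I expect each of these boundary cases to be dispatched by an explicit small construction — e.g. for $a=k$ take $q_1=q_2=k$ and pad with short paths — so the argument reduces to: (i) the generic construction $q_1=a-1$, $q_2=2k-a$, plus short paths summing to the right value; (ii) checking the $\theta$-graph hypotheses; (iii) the boundary adjustments. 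The verifications in (i)–(iii) are routine given Theorems~\ref{thm4.1} and~\ref{thm4.2}.
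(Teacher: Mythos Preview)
Your proposal is correct and follows essentially the same route as the paper: both use a generalised $\theta$-graph, fixing $q_1=a-1$, $q_2=2k-a$ (with $t\ge 3$) to force $cc_k(G)=a$ via Theorem~\ref{thm4.1} when $k+1\le a\le 2k-2$, handling $a=k$ separately through the first branch of Theorem~\ref{thm4.1}, and then padding with short extra paths so that Theorem~\ref{thm4.2} yields $rc(G)=b$. Your write-up is in fact more explicit than the paper's about the bookkeeping (the ordering $q_1\ge q_2\ge\cdots$, the constraint $q_{t-1}\ge 2$, and the boundary case $b=a$), which the paper leaves entirely to the reader.
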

\begin{proof}
Suppose that $G=Q_{q_1,q_2,\ldots,q_t}$ is a generalised $\theta$-graph. Let $Q_1\cup Q_2\cup\ldots\cup Q_t$ be the paths of $G$ with length $q_1\geq \ldots \geq q_t\geq1$, where $q_{t-1}\geq2$. If $a=k$, by Theorems \ref{thm4.1} and \ref{thm4.2}, $G$ is the required graph when $q_1+q_2\geq2a$, and $\lceil \frac{q_1+\cdots +q_t}{2}\rceil=b$. If $k+1\leq a \leq2k-2$, $G$ is the needed graph when $q_1=a-1$, $q_2=2k-a$, and $\lceil \frac{q_1+\cdots +q_t}{2}\rceil=b$, where $t\geq3$.
\end{proof}

{\bf Remark:} It is unsolved that whether there exists a connected graph such that $cc_{k}(G)=a$ and $rc(C)=b$ for $a\geq2k-1$ or $a>b$?

\end{document}